\let\oldforeign@language\foreign@language
\DeclareRobustCommand{\foreign@language}[1]{%
	\lowercase{\oldforeign@language{#1}}}
\let\oldforeign@language\foreign@language
\DeclareRobustCommand{\foreign@language}[1]{%
	\lowercase{\oldforeign@language{#1}}}
\newtheorem{defn}{Definition}
\newtheorem{lem}{Lemma}
\newtheorem{thm}{Theorem}
\newtheorem{rem}{Remark}
\newtheorem{assum}{Assumption}
\begin{document}
	\bstctlcite{IEEEexample:BSTcontrol}

	\title{Distributed Adaptive Consensus Control of High Order Unknown Nonlinear Networked Systems with Guaranteed Performance}
	
	\author{Hashim A. Hashim\thanks{H. A. Hashim is with the Department of
			Electrical and Computer Engineering, Western University,
			London, ON, Canada, N6A-5B9, (e-mail: hmoham33@uwo.ca)}}
	
	%\author{Hashim~A.~Hashim$^*$,~\IEEEmembership{~Member, IEEE}, Lyndon J. Brown, and~Kenneth McIsaac,~\IEEEmembership{~Fellow, IEEE}% <-this % stops a space
	%\thanks{$^*$Corresponding author, H. A. Hashim, L. J. Brown and K. McIsaac are with the Department of Electrical and Computer Engineering,
	%University of Western Ontario, London, ON, Canada, N6A-5B9, e-mail: hmoham33@uwo.ca, lbrown@uwo.ca and kmcisaac@uwo.ca.}}
	
	%\markboth{--,~Vol.~-, No.~-, \today}{Hashim \MakeLowercase{\textit{et al.}}: Nonlinear Pose Filters on SE(3) with Guaranteed Transient and Steady-state Performance}
	\markboth{}{Hashim \MakeLowercase{\textit{et al.}}: Distributed Adaptive Consensus Control of High Order Unknown Nonlinear Networked Systems with Guaranteed Performance}
	
	\maketitle
	
	\begin{abstract}
Adaptive cooperative tracking control with prescribed performance
function (PPF) is proposed for high-order nonlinear multi-agent systems.
The tracking error originally within a known large set is confined
to a smaller predefined set using this approach. Using output error
transformation, the constrained system is relaxed and mapped to an
unconstrained one. The controller is conceived under the assumption
that the agents' nonlinear dynamics are unknown and the perceived
network is structured and strongly connected. Under the proposed controller,
all agents track the trajectory of the leader node with guaranteed
uniform ultimately bounded transformed error and bounded adaptive
estimate of unknown parameters and dynamics. In addition, the proposed
controllers with PPF are distributed such that each follower agent
requires information between its own state relative to connected neighbors.
Proposed controller is validated for robustness and smoothness using
highly nonlinear heterogeneous networked system with uncertain time-varying
parameters and external disturbances. 
	\end{abstract}
	
	% Note that keywords are not normally used for peerreview papers.
	\begin{IEEEkeywords}
    Prescribed performance, neuro-adaptive, high order, Transformed error, Multi-agents, Distributed
    control, Consensus, Synchronization, Transient, Steady-state error, MIMO, SISO. 
	\end{IEEEkeywords}

	\IEEEpeerreviewmaketitle{}

	\section{Introduction}
	
	\IEEEPARstart{B}{iological} behavioral analogies such as bees swarming, birds flocking,
	ants foraging, fish schooling have sprung up researchers' interest
	in distributed cooperative control (DCC) and its applications. DCC
	employs the divide-and-conquer mechanism to collaborate on tasks that
	are too complicated and time-consuming for individual agent. In addition,
	agents collaborate to enhance performance and productivity by exchanging
	information in manners present in social settings.\\
	\indent DCC is of paramount importance in many applications such
	as autonomous mobile robot vehicles control, energy and mineral explorations,
	surveillance, space explorations, just to mention a few. For the purpose
	of useful information exchange, agents are networked and referred
	to as nodes. A group of agents follows at least one real or virtual
	leader. The network of these nodes can be modeled as a directed or
	undirected graph. In undirected graphs, the relationship between two
	adjacent nodes is mutual and information flows in a bi-directional
	fashion. When the direction of information flow is not necessarily
	bi-directional, the term directed graph (or simply digraph) is conventionally
	used. In directed graphs, direction of the flow of information between
	a node and its neighbors is explicitly defined.\\
	\indent Design of DCC and multi-agent systems consensus in general
	was originally introduced in \cite{fax_information_2004,ren_consensus_2005}
	and has since then gone through many layers of advancement. Several
	studies have focused on addressing several research gaps such as node
	cooperative tracking problem \cite{olfati-saber_consensus_2007} and
	consensus of passive nonlinear systems \cite{chopra_passivity-based_2006}.
	Linear heterogeneous agents with multiple-input-multiple-output (MIMO)
	dynamics and parameter uncertainties have been considered and controlled
	in a distributed fashion \cite{zhao_fully_2014}. In addition, cooperative
	tracking control problems have been addressed for both single node
	with first-order dynamics \cite{das_distributed_2010,hashim2017adaptive,el2017neuro}
	and high-order dynamics \cite{zhang_adaptive_2012,yu2017observer}.
	Unknown nonlinear heterogeneous agents connected via a digraph and
	under a neuro-adaptive distributed control scheme have also been considered
	\cite{das_distributed_2010,zhang_adaptive_2012,hashim2017neuro,el2017neuro}.\\
	\indent It is worth mentioning that most of the aforementioned studies
	assume known input in the node dynamics. To address this shortcoming,
	many studies have developed cooperative tracking control for systems
	with unknown input function \cite{theodoridis_direct_2012,el-ferik_neuro-adaptive_2014}.
	Neuro-adaptive-fuzzy-based cooperative tracking control was used to
	approximate unknown nonlinear dynamics and input functions \cite{theodoridis_direct_2012}.
	The choice of output membership functions' centroid was solely based
	on offline trials. One common assumption to many of the previous studies
	is that they mostly consider unknown nonlinear dynamics and input
	function with parameter linearity \cite{lewis_neural_1998,el-ferik_neuro-adaptive_2014}
	with the objective of ensuring ultimate stability of the tracking
	error.\\
	\indent Owing to these shortcomings, prescribed performance framework
	will be beneficial for guaranteeing desired performance. Prescribed
	performance framework has been used in many control applications in
	recent time. Multi-agent distributed control in a way addresses many
	problems such as nonlinearities, unmodeled dynamics, uncertainties,
	and disturbances. It is often conceived that closed loop characteristics
	(transient and steady state error for instance) are analytically rigorous
	\cite{bechlioulis_robust_2008,hashim2017adaptive,hashim2017neuro}.
	With prescribed performance however, such characteristics have been
	mapped to a relatively smaller set with constrained convergence. In
	prescribed performance approach, error is transformed from a space
	that is constrained into unconstrained one. Some of many objectives
	of this method are that error of convergence must be lesser than predefined
	value, prescribed constant bounds the maximum overshoot, smooth system's
	output, boundedness and smoothness of control signal is guaranteed,
	and boundedness of error \cite{bechlioulis_robust_2008,hashim2017adaptive,hashim2017neuro}. %%%%% Remove
	Details about prescribed performance is giving in subsequent section.\\
	\indent Cooperative control with prescribed performance for multi-agent
	systems is beneficial because it ensures that the consensus output
	error starts in a large predefined set with steady convergence into
	a narrow set that is predefined \cite{bechlioulis_robust_2008,hashim2017adaptive,hashim2017neuro}.
	The transient and steady-state consensus tracking errors are in conformity
	with a known time-varying performance. Adaptive cooperative control
	with prescribed performance is capable of improving the robustness
	and lowering the control effect. By carefully selecting the upper
	and lower bounds of the prescribed performance functions, the error
	converges inside a preset bounds.\\
	\indent The application of prescribed performance scheme with neural
	approximation including strict-feedback systems \cite{bechlioulis_robust_2008,mohamed_improved_2014}    %%%%% Remove
	and large-scale systems with time delays \cite{li2015prescribed}
	have been addressed in recent studies. Several studies considered
	adaptive prescribed performance based on neural network such as \cite{bechlioulis_robust_2008}.
	A vast majority of these studies were based on the assumptions that
	input matrix continuity is continuous. Significant improvements have
	been made by replacing neural network with trial and error hyper-parameter
	tuning for controller design with prescribed performance based on
	model reference adaptive control \cite{mohamed_improved_2014}.\\                                        %%%%% Remove
	\indent This paper proposes a robust adaptive distributed control
	with prescribed performance for a group of agents with high order
	nonlinear dynamics connected through a directed communication graph
	with known topology. The proposed control law is fully distributed
	in the sense that only local neighborhood information is accessible
	to each node. Also, the control law of each node is designed to preserve
	the network topology and communications from the leader do not propagate
	to all the nodes. In this work, $L$ and $B$ matrices characterize
	the general network framework. The imposed predefined characteristics
	confine the nodes synchronization error in conformity with the prescribed
	performance. The dynamics of the nodes are unknown, nonlinear, and
	with time-varying uncertainties. The controller for each node is conceived
	with predefined transient and steady-state requirements. The proposed
	ensures stable dynamics and control signal that is bounded and smooth.
	This paper considerably expands the scope of single-order nonlinear
	nodes considered in \cite{Hashim2017adaptive} to high order dynamics.\\
	\indent The rest of the paper is structured as follows: Section
	\ref{sec3} presents preliminaries of math notations and graph theory.
	Problem and local error synchronization formulations are presented
	in Section \ref{sec4}. Prescribed performance is introduced in Section
	\ref{sec5}. Section \ref{sec6} formulates the control law and stability
	proof of the connected graph. Simulations results are detailed in
	Section \ref{sec7}. Finally, conclusions are drawn with future work
	in Section \ref{sec8}.
	
	\section{Mathematical Identities And Basic Graph Theory \label{sec3}}
	
	\subsection{Math Notations and Identities}
	
	Throughout this paper, the set of real numbers is denoted as $\mathbb{R}$;
	$n$-dimensional vector space as $\mathbb{R}^{n}$; the space span
	by $n\times m$ matrix as $\mathbb{R}^{n\times m}$; identity matrix
	of order $m$ as $\mathbf{I}_{m}$; absolute value as $|\cdot|$.
	For $x\in\mathbb{R}^{n}$, the Euclidean norm is given as $\left\Vert x\right\Vert =\sqrt{x^{\top}x}$
	and matrix Frobenius norm is given as $\left\Vert \cdot\right\Vert _{F}$.
	For any $x_{i}\in\mathbb{R}^{n}$ we have $x_{i}=\left[x_{i}^{1},\ldots,x_{i}^{n}\right]^{\top}$
	for $i=1,2,\ldots,N$ and for $x^{j}\in\mathbb{R}^{N}$ we have $x^{j}=\left[x_{1}^{j},\ldots,x_{N}^{j}\right]$
	for $j=1,2,\ldots,n$. Trace of associated matrix is denoted as ${\rm Tr}\left\{ \cdot\right\} $,
	$\mathcal{N}$ is the set $\left\{ 1,...,N\right\} $, and ${\bf \underline{1}}_{N}$
	is a unity vector $[1,\ldots,1]^{\top}\in\mathbb{R}^{N}$. $A$ is
	said to be positive definite if $A>0$ for $A\in\mathbb{R}^{n\times n}$;
	$A\geq0$ indicates positive semi-definite; $\sigma\left(\cdot\right)$
	is the set of singular values of a matrix with maximum value $\bar{\sigma}\left(\cdot\right)$
	and minimum value $\underline{\sigma}\left(\cdot\right)$. 
	
	\subsection{Basic Graph Theory}
	
	$\mathcal{G}=\left(\mathcal{V},\mathcal{E}\right)$ denotes a graph
	with a nonempty finite set of nodes (or vertices) $\mathcal{V}=\left\{ \mathcal{V}_{1},\mathcal{V}_{2},\ldots,\mathcal{V}_{n}\right\} $,
	and set of edges (or arcs) $\mathcal{E}\subseteq\mathcal{V}\times\mathcal{V}$.
	$\left(\mathcal{V}_{i},\mathcal{V}_{j}\right)\in\mathcal{E}$ if an
	edge exists from node $i$ to node $j$. The structure or topology
	of weighted graph is represented using the adjacency matrix $A=\left[a_{i,j}\right]\in\mathbb{R}^{N\times N}$
	with weights $a_{i,j}>0$ if $\left(\mathcal{V}_{j},\mathcal{V}_{i}\right)\in\mathcal{E}$
	and $a_{i,j}=0$ otherwise. It is assumed in this work that the topology
	is fixed (that is, $A$ is time-invariant) and there is no self-connectivity
	(that is, $a_{i,i}=0$). The sum of $i$-th row of $A$, that is,
	$d_{i}=\sum_{j=1}^{N}a_{i,j}$ is denoted as the weighted in-degree
	of a node $i$. Letting the diagonal of in-degree matrix $D={\rm diag}\left(d_{1},\ldots,d_{N}\right)\in\mathbb{R}^{N\times N}$
	and the Laplacian matrix $L=D-A$. The set of node $i$th neighbors
	of is $N_{i}=\left\{ j\left|\left(\mathcal{V}_{j}\times\mathcal{V}_{i}\right)\in\mathcal{E}\right.\right\} $.
	Node $j$ is said to be node $i$th neighbor if node $i$ can get
	information from node $j$. A sequence of successive edges of the
	form $\left\{ \left(\mathcal{V}_{i},\mathcal{V}_{k}\right),\left(\mathcal{V}_{k},\mathcal{V}_{l}\right),\ldots,\left(\mathcal{V}_{m},\mathcal{V}_{j}\right)\right\} $
	is a direct path from node $i$ to $j$. A digraph is a spanning tree
	if a direct path exists from the root node to all other nodes within
	a graph. A strongly connected digraph if for any ordered pair of nodes
	$\left[\mathcal{V}_{i},\mathcal{V}_{j}\right]$ with $i\neq j$, there
	is direct path from node $i$ to node $j$ \cite{ren_distributed_2008,lewis_cooperative_2013}.
	
	\section{Problem Formulation \label{sec4}}
	
	Consider the following nonlinear dynamics for the $i$th node 
	\begin{equation}
	\begin{aligned}\dot{x}_{i}^{1} & =x_{i}^{2}\\
	\dot{x}_{i}^{2} & =x_{i}^{3}\\
	& \vdots\\
	\dot{x}_{i}^{M_{p}} & =f_{i}\left(x_{i}\right)+G_{i}u_{i}\\
	y_{i} & =x_{i}^{1}
	\end{aligned}
	\label{eq:equa1}
	\end{equation}
	where $x_{i}^{m_{p}}\in\mathbb{R}^{P}$ denotes the $m_{p}$th-state
	of $i$, $P\geq1$ is number of inputs equal to number of outputs,
	$m_{p}=1,2,\ldots,M_{P}$, $M_{P}\geq1$ is the system order, $x_{i}=\left[x_{i}^{1},\ldots,x_{i}^{M_{p}}\right]^{\top}\in\mathbb{R}^{PM_{p}}$,
	and $i=1,2,\ldots,N$ with $N$ is number of agents in the graph. $G_{i}\in\mathbb{R}^{P\times P}$
	is a known control input matrix, $u_{i}\in\mathbb{R}^{P}$ is the
	control signal node with output vector $y_{i}\in\mathbb{R}^{P}$ such
	that $p=1,2,\ldots,P$. $f_{i}:\mathbb{R}^{PM_{p}}\rightarrow\mathbb{R}^{P}$
	is the unknown but Lipschitz nonlinear vector of dynamics. For simplicity
	we denote $x:=x\left(t\right)$ and $u:=u\left(t\right)$. The corresponding
	global dynamics of \eqref{eq:equa1} can be described by 
	\begin{equation}
	\begin{aligned}\dot{x}^{1} & =x^{2}\\
	\dot{x}^{2} & =x^{3}\\
	& \vdots\\
	\dot{x}^{M_{p}} & =f\left(x\right)+Gu\\
	y & =x^{1}
	\end{aligned}
	\label{eq:equa2}
	\end{equation}
	where $x^{m_{p}}=\left[x_{1}^{m_{p}},\ldots,x_{N}^{m_{p}}\right]^{\top}\in\mathbb{R}^{PN}$,
	$u=\left[u_{1}^{\top},\ldots,u_{N}^{\top}\right]^{\top}\in\mathbb{R}^{PN}$,
	$G={\rm diag}\left\{ G_{i}\right\} \in\mathbb{R}^{PN\times PN}$,
	$y=\left[y_{1},\ldots,y_{N}\right]^{\top}\in\mathbb{R}^{PN}$, $i=1,\ldots,N$
	and $f\left(x\right)=\left[f_{1}\left(x_{1}\right),\ldots,f_{N}\left(x_{N}\right)\right]^{\top}\in\mathbb{R}^{PN}$.
	The state dynamics of the leader are given as in \eqref{eq:equa3}
	\begin{equation}
	\begin{aligned}\dot{x}_{0}^{1} & =x_{0}^{2}\\
	\dot{x}_{0}^{2} & =x_{0,3}\\
	& \vdots\\
	\dot{x}_{0}^{M_{p}} & =f_{0}\left(t,x_{0}\right)\\
	y_{0} & =x_{0}^{1}
	\end{aligned}
	\label{eq:equa3}
	\end{equation}
	where $x_{0}$ is the leader state vector, which could be time varying;
	$x_{0,m_{p}}\in\mathbb{R}^{P}$ is the $m^{th}$ state variable of
	the leader where $x_{0}=\left[x_{0}^{1},\ldots,x_{0}^{M_{P}}\right]^{\top}$;
	the leader nonlinear function vector $f_{0}:\left[0,\infty\right)\times\mathbb{R}^{PM_{P}}\rightarrow\mathbb{R}^{P}$
	is piece-wise continuous in $t$ and locally Lipschitz. The disagreement
	variable in $i$ is $\delta_{i}^{1}=x_{i}^{1}-x_{0}^{1}$ and the
	global disagreement is 
	\begin{equation}
	\begin{aligned}\gamma^{M_{p}} & =x^{1}-\underline{x}_{0}^{1}\end{aligned}
	\label{eq:equa4}
	\end{equation}
	where $\gamma^{M_{p}}=\left[\gamma_{1}^{1},\ldots,\gamma_{N}^{1}\right]^{\top}\in\mathbb{R}^{PN}$,
	$\underline{x}_{0}^{1}=\left[x_{0}^{1},\ldots,x_{0}^{1}\right]^{\top}\in\mathbb{R}^{PN}$.
	It is assumed that the distributed state information of the communication
	graph for $i$th node is known. The neighborhood synchronization error
	$e:=e\left(t\right)$ is defined as in \cite{li_pinning_2004,khoo_robust_2009}
	by 
	\begin{equation}
	\begin{aligned}e_{i} & =\sum_{j\in N_{i}}a_{i,j}\left(x_{i}^{1}-x_{j}^{1}\right)+b_{i,i}\left(x_{i}^{1}-x_{0}^{1}\right)\end{aligned}
	\label{eq:equa5}
	\end{equation}
	where $e_{i}=\left[e_{i}^{1},\ldots,e_{i}^{P}\right]^{\top}\in\mathbb{R}^{P}$,
	$a_{i,j}\geq0$ and $a_{i,j}>0$ when agent $i$ is a neighbor of
	agent $j$; $b_{i}\geq0$ and $b_{i}>0$ when one or more agents are
	neighbors of the leader. $e^{p}=\left[e_{1}^{p},\ldots,e_{N}^{p}\right]^{\top}\in\mathbb{R}^{N}$,
	$p=1,\ldots,P$ and $B={\rm diag}\{b_{i}\}\in\mathbb{R}^{N\times N}$.
	Hence, the global error dynamics in \eqref{eq:equa6} 
	\begin{equation}
	\begin{aligned}e & =-\left(L+B\right)\left(\underline{x}_{0}^{1}-x^{1}\right)=\left(L+B\right)\left(x^{1}-\underline{x}_{0}^{1}\right)\end{aligned}
	\label{eq:equa6}
	\end{equation}
	By re-writing global error dynamics in state vector form gives the
	expression in \eqref{eq:equa7} 
	\begin{equation}
	\begin{aligned}\dot{e}^{1} & =e^{2}\\
	\dot{e}^{2} & =e^{3}\\
	& \vdots\\
	\dot{e}^{M_{p}} & =\left(L+B\right)\left(f\left(x\right)+Gu-\underline{f}_{0}\right)
	\end{aligned}
	\label{eq:equa7}
	\end{equation}
	with bounded reference nonlinear dynamics\\
	$\underline{f}_{0}=\left[f_{0}\left(t,x_{0}\right),\ldots,f_{0}\left(t,x_{0}\right)\right]^{\top}\in\mathbb{R}^{N}$.
	The proof of equation \eqref{eq:equa7} can be found in \cite{bechlioulis_robust_2008}.\\
	
	\begin{rem}
		\label{Rem1}The global error dynamics in \eqref{eq:equa6} for MIMO
		where $P>1$ becomes 
		\begin{equation}
		\begin{aligned}e & =-\left(\left(L+B\right)\otimes\mathbf{I}_{P}\right)\left(\underline{x}_{0}^{1}-x^{1}\right)\\
		& =\left(\left(L+B\right)\otimes\mathbf{I}_{P}\right)\left(x^{1}-\underline{x}_{0}^{1}\right)
		\end{aligned}
		\label{eq:equa8}
		\end{equation}
		In the same vein as \eqref{eq:equa8}, \eqref{eq:equa7} becomes 
		\begin{equation}
		\begin{aligned}\dot{e}^{1} & =e^{2}\\
		\dot{e}^{2} & =e^{3}\\
		& \vdots\\
		\dot{e}^{M_{p}} & =\left(\left(L+B\right)\otimes\mathbf{I}_{P}\right)\left(f\left(x\right)+Gu-\underline{f}_{0}\right)
		\end{aligned}
		\label{eq:equa9}
		\end{equation}
		where $\otimes$ is the Kronecker product and $\mathbf{I}_{P}\in\mathbb{R}^{P\times P}$
		is the identity matrix. 
	\end{rem}
	\begin{rem}
		\label{Rem2} If $b_{i}\neq0$ for at least one $i$ with $i=1,\ldots,N$
		then $\left(L+B\right)$ is an irreducible diagonally dominant matrix
		M and hence nonsingular \cite{Qu2009}. 
	\end{rem}
	For a strongly connected graph, $B\neq0$ and 
	\begin{equation}
	\begin{aligned}\left\Vert e_{0}\right\Vert  & \leq\frac{\left\Vert e\right\Vert }{\underline{\sigma}\left(L+B\right)}\end{aligned}
	\label{eq:equa10}
	\end{equation}
	where $\underline{\sigma}\left(L+B\right)$ is the minimum singular
	value of $L+B$.
	
	\section{Prescribed Performance \label{sec5}}
	
	As mentioned earlier, error can be confined using a decreasing function
	with prescribed features known as the Prescribed Performance Function
	(PPF) \cite{bechlioulis_robust_2008}. PPF offers a way of improving
	the transient behavior and control signal by enforcing certain features
	on error signal. $\rho\left(t\right)$ denotes the main component
	of PPF which is a smooth positive decreasing function given in \eqref{eq:equa11}
	such that $\rho:\mathbb{R}_{+}\to\mathbb{R}_{+}$ and $\lim\limits _{t\to\infty}\rho\left(t\right)=\rho_{\infty}>0$
	hold. 
	\begin{equation}
	\rho_{i}^{p}\left(t\right)=\left(\rho_{i,0}^{p}-\rho_{i,\infty}^{p}\right)\exp\left(-\ell_{i}^{p}t\right)+\rho_{i,\infty}^{p}\label{eq:equa11}
	\end{equation}
	where $\rho_{i,0}^{p}$ is the maximum or the initial value of PPF
	set, $\rho_{i,\infty}^{p}$ is the minimum or the steady state value
	of allocated set, and $\ell_{i}^{p}$ tunes the reduction of set boundaries
	with respect to time. PPF and error component $e\left(t\right)$ satisfy
	the following properties: 
	\begin{align}
	-\delta_{i}^{p}\rho_{i}^{p}\left(t\right)<e_{i}^{p}\left(t\right)<\rho_{i}^{p}\left(t\right) & ,\text{ if }e_{i}^{p}\left(0\right)>0\label{eq:equa12}\\
	-\rho_{i}^{p}\left(t\right)<e_{i}^{p}\left(t\right)<\delta_{i}^{p}\rho_{i}^{p}\left(t\right) & ,\text{ if }e_{i}^{p}\left(0\right)<0\label{eq:equa13}
	\end{align}
	for all $t\geq0$, $1\geq\delta_{i}^{p}\geq0$, $i=1,\ldots,N$ and
	$p=1,\ldots,P$. The tracking errors of agents with prescribed performance
	reduces from a large to a smaller set in accordance with \eqref{eq:equa12}
	and \eqref{eq:equa13} as shown in Fig.~\ref{fig:fig1}. The complete
	idea of prescribed performance is illustrated in Fig.~\ref{fig:fig1}.
	\begin{figure}[h!]
		\centering \includegraphics[scale=0.27]{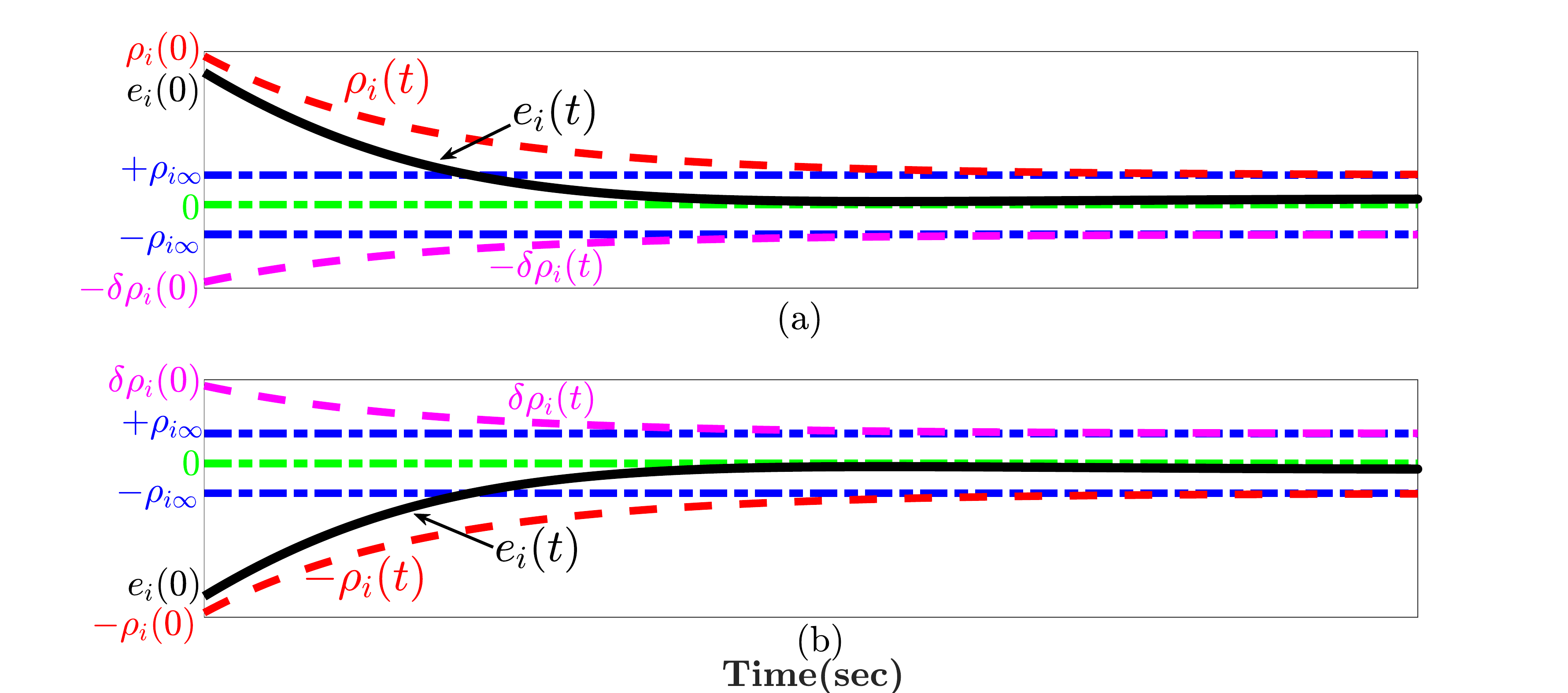} \caption{ Graphical representation of tracking error with prescribed performance
			satisfying (a) Eq.~\eqref{eq:equa12}; (b) Eq.~\eqref{eq:equa13}.}
		\label{fig:fig1} 
	\end{figure}
	
	\begin{rem}
		\label{rem3} As detailed in \cite{bechlioulis_robust_2008,hashim2017adaptive,hashim2017neuro},
		knowing the sign of $e_{i}\left(0\right)$ is sufficient to maintain
		the same robust controller for all $t>0$ and satisfy the performance
		constraints since switching does not occur after $t=0$. 
	\end{rem}
	A transformed error drives the error dynamics from constrained bounds
	in either \eqref{eq:equa12} or \eqref{eq:equa13} to that in \eqref{eq:equa14}
	which is unconstrained. 
	\begin{equation}
	\varepsilon_{i}^{p}\left(t\right)=\Upsilon\left(\frac{e_{i}^{p}\left(t\right)}{\rho_{i}^{p}\left(t\right)}\right)\label{eq:equa14}
	\end{equation}
	Subsequently, 
	\begin{equation}
	e_{i}^{p}\left(t\right)=\rho_{i}^{p}\left(t\right)\mathcal{F}\left(\varepsilon_{i}^{p}\right)\label{eq:equa15}
	\end{equation}
	where $\varepsilon_{i}^{p}$, $\mathcal{F}\left(\cdot\right)$ and
	$\Upsilon^{-1}\left(\cdot\right)$ are smooth functions, $i=1,2,\ldots,N$.
	For clarity, $\varepsilon:=\varepsilon\left(t\right)$ and $\rho:=\rho\left(t\right)$
	are defined accordingly. The smooth function $\mathcal{F}\left(\cdot\right)=\Upsilon^{-1}\left(\cdot\right)$
	and $\mathcal{F}\left(\cdot\right)$ must satisfy \cite{bechlioulis_robust_2008}: 
	\begin{enumerate}
		\item[P 1)] $\mathcal{F}\left(\varepsilon_{i}^{p}\right)$ is smooth and strictly
		increasing. 
		\item[P 2)] $\mathcal{F}\left(\varepsilon_{i}^{p}\right)$ is bounded between
		two predefined bounds \\
		$-\underline{\delta}_{i}^{p}<\mathcal{F}\left(\varepsilon_{i}^{p}\right)<\bar{\delta}_{i}^{p},\text{ if }e_{i}^{p}\left(0\right)\geq0$\\
		$-\bar{\delta}_{i}^{p}<\mathcal{F}\left(\varepsilon_{i}^{p}\right)<\underline{\delta}_{i}^{p},\text{ if }e_{i}^{p}\left(0\right)<0$ 
		\item[P 3)] 
		\item[] $\left.\begin{array}{c}
		\underset{\varepsilon_{i}^{p}\rightarrow-\infty}{\lim}\mathcal{F}\left(\varepsilon_{i}^{p}\right)=-\underline{\delta}_{i}^{p}\\
		\underset{\varepsilon_{i}^{p}\rightarrow+\infty}{\lim}\mathcal{F}\left(\varepsilon_{i}^{p}\right)=\bar{\delta}_{i}^{p}
		\end{array}\right\} \text{ if }e_{i}^{p}\left(0\right)\geq0$\\
		$\left.\begin{array}{c}
		\underset{\varepsilon_{i}^{p}\rightarrow-\infty}{\lim}\mathcal{F}\left(\varepsilon_{i}^{p}\right)=-\bar{\delta}_{i}^{p}\\
		\underset{\varepsilon_{i}^{p}\rightarrow+\infty}{\lim}\mathcal{F}\left(\varepsilon_{i}^{p}\right)=\underline{\delta}_{i}^{p}
		\end{array}\right\} \text{ if }e_{i}^{p}\left(0\right)<0$ 
	\end{enumerate}
	where\\
	
	\begin{equation}
	\mathcal{F}\left(\varepsilon_{i}^{p}\right)=\begin{cases}
	\frac{\bar{\delta}_{i}^{p}\exp\left(\varepsilon_{i}^{p}\right)-\underline{\delta}_{i}^{p}\exp\left(-\varepsilon_{i}^{p}\right)}{\exp\left(\varepsilon_{i}^{p}\right)+\exp\left(-\varepsilon_{i}^{p}\right)}, & \bar{\delta}_{i}^{p}\geq\underline{\delta}_{i}^{p}\text{ if }e_{i}^{p}\left(0\right)\geq0\\
	\frac{\bar{\delta}_{i}^{p}\exp\left(\varepsilon_{i}^{p}\right)-\underline{\delta}_{i}^{p}\exp\left(-\varepsilon_{i}^{p}\right)}{\exp\left(\varepsilon_{i}^{p}\right)+\exp\left(-\varepsilon_{i}^{p}\right)}, & \underline{\delta}_{i}^{p}\geq\bar{\delta}_{i}^{p}\text{ if }e_{i}^{p}\left(0\right)<0
	\end{cases}\label{eq:equa16}
	\end{equation}
	The function $\mathcal{F}\left(\varepsilon_{i}\right)$ is given as
	\begin{equation}
	\begin{aligned}\mathcal{F}\left(\varepsilon_{i}^{p}\right)= & \frac{\bar{\delta}_{i}^{p}\exp_{i}^{p}\left(\varepsilon_{i}^{p}\right)-\underline{\delta}_{i}^{p}\exp_{i}^{p}\left(-\varepsilon_{i}^{p}\right)}{\exp_{i}^{p}\left(\varepsilon_{i}^{p}\right)+\exp_{i}^{p}\left(-\varepsilon_{i}^{p}\right)}\end{aligned}
	\label{eq:equa17}
	\end{equation}
	and the transformed error is defined as 
	\begin{equation}
	\begin{aligned}\varepsilon_{i}^{p}= & \mathcal{F}^{-1}\left(e_{i}^{p}/\rho_{i}^{p}\right)\\
	= & \frac{1}{2}\begin{cases}
	\text{ln}\frac{\underline{\delta}_{i}^{p}+e_{i}^{p}/\rho_{i}^{p}}{\bar{\delta}_{i}^{p}-e_{i}^{p}/\rho_{i}^{p}}, & \bar{\delta}_{i}^{p}\geq\underline{\delta}_{i}^{p}\text{ if }e_{i}^{p}\left(0\right)\geq0\\
	\text{ln}\frac{\underline{\delta}_{i}^{p}+e_{i}/\rho_{i}^{p}}{\bar{\delta}_{i}^{p}-e_{i}^{p}/\rho_{i}^{p}}, & \underline{\delta}_{i}^{p}\geq\bar{\delta}_{i}^{p}\text{ if }e_{i}^{p}\left(0\right)<0
	\end{cases}
	\end{aligned}
	\label{eq:equa18}
	\end{equation}
	Then $\dot{\varepsilon}_{i}^{p}$ is governed by 
	\begin{equation}
	\dot{\varepsilon}_{i}^{p}=\frac{1}{2\rho_{i}^{p}}\left(\frac{1}{\underline{\delta}_{i}^{p}+e_{i}^{p}/\rho_{i}^{p}}+\frac{1}{\bar{\delta}_{i}^{p}-e_{i}^{p}/\rho_{i}^{p}}\right)\left(\dot{e}_{i}^{p}-\frac{e_{i}^{p}\dot{\rho}_{i}^{p}}{\rho_{i}^{p}}\right)\label{eq:equa19}
	\end{equation}
	where $\varepsilon_{i}\in\mathbb{R}^{P\times1}$.\\
	From \eqref{eq:equa19}, new variable $r_{i}^{p}$ is defined as
	follows 
	\begin{equation}
	\begin{split}r_{i}^{p} & =\frac{1}{2\rho_{i}^{p}}\frac{\partial\mathcal{F}^{-1}\left(e_{i}^{p}/\rho_{i}^{p}\right)}{\partial\left(e_{i}^{p}/\rho_{i}^{p}\right)}\\
	& =\frac{1}{2\rho_{i}^{p}}\left(\frac{1}{\underline{\delta}_{i}^{p}+e_{i}^{p}/\rho_{i}^{p}}+\frac{1}{\bar{\delta}_{i}^{p}-e_{i}^{p}/\rho_{i}^{p}}\right)
	\end{split}
	\label{eq:equa20}
	\end{equation}
	Then a metric error ${\bf E}_{i}\in\mathbb{R}^{P\times1}$ is introduced
	and it is given as in \eqref{eq:equa21} or equivalently in \eqref{eq:equa22}.
	\begin{equation}
	{\bf E}_{i}=\left(\frac{d}{dt}+\lambda_{i}^{m_{p}}\right)^{M_{p}-1}\varepsilon_{i}^{1}\label{eq:equa21}
	\end{equation}
	\begin{equation}
	{\bf E}_{i}=\varepsilon_{i}^{M_{p}}+\lambda_{i,M_{p}-1}\varepsilon_{i,M_{p}-1}+\cdots+\lambda_{i}^{1}\varepsilon_{i}^{1}\label{eq:equa22}
	\end{equation}
	for $i=1,\ldots,N$ and $m_{p}=1,\ldots,M_{p}$. Putting \eqref{eq:equa22}
	in global form, we have: 
	\begin{equation}
	{\bf E}=\varepsilon^{M_{p}}+\lambda_{M_{p}-1}\varepsilon^{M_{p}-1}+\cdots+\lambda_{1}\varepsilon^{1}\label{eq:equa23}
	\end{equation}
	where $\lambda_{i}^{m_{p}}$ is a positive constant, $\varepsilon^{m_{p}}=\left[\varepsilon_{1}^{m_{p}},\ldots,\varepsilon_{N}^{m_{p}}\right]^{\top}$,
	$m_{p}=1,\ldots,M_{P}$.\\
	With the assumptions that: % \begin{equation}
	%   \Phi_{1} =\left[\varepsilon^{1},\varepsilon^{2},\ldots,\varepsilon^{Mp-1}\right]^{\top}\label{eq:equa500}
	% \end{equation}
	% \begin{equation}
	% \Phi_{2} =\dot{\Phi}_{1}=\left[\varepsilon^{2},\varepsilon^{3},\ldots,\varepsilon^{Mp}\right]^{\top} \label{eq:equa501}
	% \end{equation}
	
	\begin{align}
	\Phi_{1} & =\left[\varepsilon^{1},\varepsilon^{2},\ldots,\varepsilon^{Mp-1}\right]^{\top}\label{eq:equa500}\\
	\Phi_{2} & =\dot{\Phi}_{1}=\left[\varepsilon^{2},\varepsilon^{3},\ldots,\varepsilon^{Mp}\right]^{\top}\label{eq:equa501}\\
	l & =\left[0,0,\ldots,0,1\right]^{\top}\in\mathbb{R}^{M_{p}-1}\nonumber 
	\end{align}
	
	%\[
	%\begin{aligned}
	%l & =\left[0,0,\ldots,0,1\right]^{\top}\in\mathbb{R}^{M_{p}-1}
	%\end{aligned}
	%\]
	and 
	\[
	\begin{aligned}\Lambda & =\begin{bmatrix}0 & 1 & 0 & \cdots & 0 & 0\\
	0 & 0 & 1 & \cdots & 0 & 0\\
	\vdots & \vdots & \vdots & \ddots & \vdots & \vdots\\
	0 & 0 & 0 & \cdots & 0 & 1\\
	-\lambda^{1} & -\lambda^{2} & -\lambda^{3} & \cdots & \lambda^{M_{p}-2} & -\lambda^{M_{p}-1}
	\end{bmatrix}\end{aligned}
	\]
	and $\Lambda$ is Hurwitz, then, 
	\begin{equation}
	\begin{aligned}\Phi_{2} & =\Phi_{1}\Lambda^{\top}+{\bf E}l^{\top}\end{aligned}
	\label{eq:equa24}
	\end{equation}
	and 
	\begin{equation}
	\begin{aligned}\Lambda^{\top}M+M\Lambda= & -\beta\mathbf{I}_{M_{p}-1}\end{aligned}
	\label{eq:equa25}
	\end{equation}
	where $\beta$ is a positive constant, $M>0$ and $\mathbf{I}_{M_{p}-1}$
	is the identity matrix with dimension $M_{p}-1$. By considering \eqref{eq:equa7}
	and \eqref{eq:equa19}, then, the derivative of the metric error in
	\eqref{eq:equa21} is given as 
	\begin{equation}
	\begin{aligned}\dot{{\bf E}}_{i}= & \sum\limits _{j=1}^{M_{p}-1}\begin{bmatrix}M_{p}-1\\
	j
	\end{bmatrix}\lambda_{i}^{j}\varepsilon_{i}^{M_{p}-j}+\varepsilon_{i}^{M_{p}}\end{aligned}
	\label{eq:equa26}
	\end{equation}
	with $\bar{\lambda}=\left[\lambda^{1},\ldots,\lambda^{M_{p}-1}\right]^{\top}\in\mathbb{R}^{PN}$.
	Hence, the global form of \eqref{eq:equa26} is 
	\begin{equation}
	\begin{aligned}\dot{{\bf E}} & =\varepsilon^{M_{p}+1}+\Phi_{2}\bar{\lambda}\\
	& =R\left(L+B\right)\left(f\left(x\right)+Gu-\underline{f}_{0}\right)+\Delta+\Phi_{2}\bar{\lambda}
	\end{aligned}
	\label{eq:equa27}
	\end{equation}
	where ${\bf E}=\left[{\bf E}_{1},\ldots,{\bf E}_{N}\right]^{\top}\in\mathbb{R}^{PN}$;
	$\varepsilon^{M_{p}+1}=e^{M_{p}+1}+\Delta$; $\Delta$ is the function
	of higher orders of $\rho_{i}^{p}$, $r_{i}^{p}$. $\Delta$ is vanishing
	because high orders of $\rho_{i}^{p}$, $r_{i}^{p}$ vanish, that
	is, tend to to zero as $t\rightarrow\infty$. Also, $R={\rm diag}\left\{ \Omega_{i}\right\} \in\mathbb{R}^{PN\times PN}$
	and 
	\[
	\Omega_{i}=\begin{bmatrix}\frac{1}{2\rho_{i}^{1}}\frac{\partial\mathcal{F}^{-1}\left(e_{i}^{1}/\rho_{i}^{1}\right)}{\partial\left(e_{i}^{1}/\rho_{i}^{1}\right)} & \cdots & 0\\
	\vdots & \ddots & \vdots\\
	0 & \cdots & \frac{1}{2\rho_{i}^{P}}\frac{\partial\mathcal{F}^{-1}\left(e_{i}^{P}/\rho_{i}^{P}\right)}{\partial\left(e_{i}^{P}/\rho_{i}^{P}\right)}
	\end{bmatrix}
	\]
	$\Omega_{i}$ is a decreasing positive definite matrix with $\Omega_{i}>0$
	and its components are defined in \eqref{eq:equa20}. The following
	definitions are also crucial (see \cite{das_distributed_2010}). 
	\begin{defn}
		\label{def1} The global neighborhood error $e\left(t\right)\in\mathbb{R}^{PN}$
		is uniformly ultimately bounded (UUB) if there exists a compact set
		$\Psi\subset\mathbb{R}^{PN}$ such that $\forall e\left(t_{0}\right)\in\Psi$,
		there exist a bound $B$ and a time $t_{f}\left(B,e\left(t_{0}\right)\right)$,
		both independent at $t_{0}\geq0$, so that $\left\Vert e\left(t\right)\right\Vert \leq B$
		$\forall t>t_{0}+t_{f}$. 
	\end{defn}
	\begin{defn}
		\label{def2} The control node trajectory $x_{0}\left(t\right)$ given
		in \eqref{eq:equa1} is cooperatively UUB with respect to solutions
		of node dynamics in \eqref{eq:equa3} if there exists a compact set
		$\Psi\subset\mathbb{R}^{PN}$ such that $\forall\left(x_{i}\left(t_{0}\right)-x_{0}\left(t_{0}\right)\right)\in\Psi$,
		there exist a bound $B$ and a time $t_{f}\left(B,\left(x\left(t_{0}\right)-x_{0}\left(t_{0}\right)\right)\right)$,
		both independent at $t_{0}\geq0$, so that $\left\Vert x\left(t_{0}\right)-x_{0}\left(t_{0}\right)\right\Vert \leq B$,
		$\forall i$, $\forall t>t_{0}+t_{f}$. 
	\end{defn}
	
	\section{Adaptive Projection Approximation \label{sec6}}
	
	The $i^{th}$ agent nonlinear dynamics in \eqref{eq:equa1} can be
	approximated as 
	\begin{equation}
	\begin{aligned}\dot{x}_{i}^{1} & =x_{i}^{2}\\
	\dot{x}_{i}^{2} & =x_{i}^{3}\\
	& \vdots\\
	\dot{x}_{i}^{M_{p}} & =G_{i}u_{i}+\theta_{i}\left\Vert x_{i}\right\Vert _{\infty}+\omega_{i}\\
	y_{i} & =x_{i}^{1}
	\end{aligned}
	\label{eq:equa28}
	\end{equation}
	with $\theta_{i},\omega_{i}\in\mathbb{R}^{P}$ are unknown entities
	in the model such as uncertainties and external disturbances \cite{hashim2017adaptive}.                        % Remove
	\begin{assum}
		\label{assump1} (Uniform boundedness of the unknown parameters)\\
		Let $\theta_{i}\in\Theta_{i}$ and $\omega_{i}\in\Delta_{i}$ of
		which $\Theta_{i}$ and $\Delta_{i}$ are known convex compact sets. 
	\end{assum}
	The unknown nonlinearities of a local node can be approximated by
	\begin{equation}
	\begin{aligned}f\left(x_{i}\right) & =\theta_{i}\left\Vert x_{i}\right\Vert {}_{\infty}+\omega_{i}\left(x_{i},t\right)\end{aligned}
	\label{eq:equa29}
	\end{equation}
	
	\begin{assum}
		\label{assump2} Matrix $G_{i}$ is known and invertible, that is,
		$G_{i}^{-1}$ exists. 
	\end{assum}
	Let $\hat{\theta}$ and $\hat{\omega}$ be the approximations of $\theta$
	and $\omega$, respectively. Then the estimation of unknown nonlinearities
	for a local node is given as 
	\begin{equation}
	\begin{aligned}\hat{f}\left(x_{i}\right) & =\hat{\theta}_{i}\left\Vert x_{i}\right\Vert _{\infty}+\hat{\omega}_{i}\left(x_{i},t\right)\end{aligned}
	\label{eq:equa30}
	\end{equation}
	The estimation error of nonlinearities is thus 
	\begin{equation}
	\begin{aligned}\tilde{f}\left(x_{i}\right) & =f\left(x_{i}\right)-\hat{f}\left(x_{i}\right)=\tilde{\theta_{i}}\left\Vert x_{i}\right\Vert _{\infty}+\tilde{\omega}_{i}\end{aligned}
	\label{eq:equa31}
	\end{equation}
	where 
	\begin{equation}
	\tilde{\theta}_{i}=\theta_{i}-\hat{\theta}_{i}\label{eq:equa32}
	\end{equation}
	\begin{equation}
	\tilde{\omega}_{i}=\omega_{i}-\hat{\omega}_{i}\label{eq:equa33}
	\end{equation}
	In this work, subsequent assumptions are considered. It is worthy
	of note that the values of the estimation bounds are not necessary
	known. 
	\begin{assum}
		\label{assump3} \cite{hovakimyan_l1_2010}                                           % Remove
	\begin{enumerate}
		\item States of the leader are bounded by $\left\Vert x_{0}\right\Vert \leq X_{0}$. 
		\item Dynamics of the leader is unknown and bounded such that $\left\Vert \underline{f}_{0}\left(x_{0},t\right)\right\Vert \leq F_{M}$. 
		\item Unknown parameters are uniformly bounded: $\left\Vert \theta\right\Vert \leq\theta_{M}$
		and $\left\Vert \omega\right\Vert \leq\omega_{M}$ for all $t>0$. 
	\end{enumerate}
	\end{assum}
	\begin{lem}
		\label{Lemma1}\cite{lewis_cooperative_2013} Define 
		\begin{equation}
		q=\left[q_{1},\ldots,q_{N}\right]^{\top}=\left(L+B\right)^{\top}\cdot{\bf \underline{1}}_{N}\label{eq:equa34}
		\end{equation}
		\begin{equation}
		\mathcal{M}={\rm diag}\left\{ m_{i}\right\} ={\rm diag}\left\{ 1/q_{i}\right\} \label{eq:equa35}
		\end{equation}
		Then $\mathcal{M}>0$ and the matrix $Q$ is defined as 
		\begin{equation}
		Q=\mathcal{M}\left(L+B\right)+\left(L+B\right)^{\top}\mathcal{M}\label{eq:equa36}
		\end{equation}
	\end{lem}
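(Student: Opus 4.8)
The plan is to read the statement as asserting both that $\mathcal{M}$ in \eqref{eq:equa35} is positive definite (so the construction is well posed) and, through \eqref{eq:equa36}, that $Q$ is positive definite as well, and to obtain everything from the $\mathrm{M}$-matrix structure of $L+B$ already recorded in Remark~\ref{Rem2}. Since $b_i\neq 0$ for at least one $i$ and the digraph is strongly connected, Remark~\ref{Rem2} gives that $L+B$ is an irreducible nonsingular $\mathrm{M}$-matrix; hence $(L+B)^{-1}$ exists and is entrywise nonnegative, and no row and no column of it can vanish identically because $(L+B)(L+B)^{-1}=\mathbf{I}_{N}$. The vector $q$ of \eqref{eq:equa34}, whose entries are (up to transposition) a row sum or a column sum of $(L+B)^{-1}$, therefore satisfies $q_i>0$ for every $i$, so each $m_i=1/q_i$ is well defined and strictly positive and $\mathcal{M}={\rm diag}\{m_i\}>0$, which is \eqref{eq:equa35}.

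For the positive definiteness of $Q$ I would argue by diagonal dominance. By \eqref{eq:equa36} the matrix $Q=\mathcal{M}(L+B)+(L+B)^{\top}\mathcal{M}$ is symmetric; its diagonal entries equal $2m_i(d_i+b_i)>0$ and its off-diagonal entries equal $-m_ia_{ij}-m_ja_{ji}\le 0$, so $Q$ is a symmetric matrix with nonpositive off-diagonal entries and positive diagonal. The key step is to show that each row sum of $Q$ is strictly positive: here one uses that the graph Laplacian annihilates the ones vector, so $(L+B){\bf \underline{1}}_{N}=B{\bf \underline{1}}_{N}$, together with the linear relation defining $q$ in \eqref{eq:equa34}, which makes one of the two summands of $Q$ contribute the nonnegative amount $m_ib_i$ to the $i$-th row sum while the normalization forces the other summand to contribute a fixed positive constant. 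A symmetric matrix with nonpositive off-diagonal entries, positive diagonal, and strictly positive row sums is strictly diagonally dominant, hence, by Gershgorin's disc theorem applied to its real spectrum, positive definite; therefore $Q>0$.

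The sign-pattern bookkeeping and the closing Gershgorin argument are routine; the step that requires care is the row-sum identity, since that is exactly where the coupling between $\mathcal{M}$ and $L+B$ prescribed by \eqref{eq:equa34}--\eqref{eq:equa35} enters. A generic positive diagonal scaling of $L+B$ would not make one of the two contributions collapse to a constant — indeed, not every nonsingular $\mathrm{M}$-matrix $M$ admits a positive diagonal $D$ with $DM+M^{\top}D>0$ — so the graph-theoretic hypotheses (strong connectivity together with $B\neq 0$, via Remark~\ref{Rem2}) are genuinely used, and they are also what upgrades $q_i\ge 0$ to the strict inequality $q_i>0$ on which \eqref{eq:equa35} depends.
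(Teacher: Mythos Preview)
The paper does not prove Lemma~\ref{Lemma1}; it is quoted verbatim from \cite{lewis_cooperative_2013} and simply used in the proof of Theorem~\ref{theorem1}. So there is no in-paper argument to compare against, and your proposal is effectively a reconstruction of the cited result.

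Your overall strategy (nonsingular irreducible $\mathrm{M}$-matrix $\Rightarrow$ strictly positive inverse $\Rightarrow$ positivity of the scaling vector; then symmetric $Z$-matrix with strictly positive row sums $\Rightarrow$ strict diagonal dominance $\Rightarrow$ $Q>0$) is the standard one and is sound in outline. There are, however, two genuine gaps.

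First, you silently reinterpret \eqref{eq:equa34}. As printed, $q=(L+B)^{\top}\underline{\mathbf 1}_N$ gives the column sums of $L+B$, namely $q_i=d_i+b_i-\sum_j a_{ji}$, which can be negative; with that reading $\mathcal{M}>0$ simply fails. Your argument instead treats $q$ as a row/column sum of $(L+B)^{-1}$, i.e.\ $q=(L+B)^{-\top}\underline{\mathbf 1}_N$, which is almost certainly the intended formula from \cite{lewis_cooperative_2013}. That correction should be stated explicitly, not absorbed into ``up to transposition.''

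Second, and more seriously, your row-sum identity for $Q$ does not go through with $\mathcal{M}=\mathrm{diag}\{1/q_i\}$. The first block is fine: the $i$-th row sum of $\mathcal{M}(L+B)$ is $m_i\sum_j(L+B)_{ij}=m_ib_i\ge 0$ because $L\underline{\mathbf 1}_N=0$. But the $i$-th row sum of $(L+B)^{\top}\mathcal{M}$ is $\sum_j (L+B)_{ji}\,m_j=\big[(L+B)^{\top}m\big]_i$, and with $m_j=1/q_j$ there is no relation forcing this to be a fixed positive constant. The ``normalization'' you invoke is $(L+B)^{\top}q=\underline{\mathbf 1}_N$, which yields $\big[(L+B)^{\top}\mathrm{diag}(q)\,\underline{\mathbf 1}_N\big]_i=1$, i.e.\ it works when the scaling is $\mathrm{diag}\{q_i\}$, not $\mathrm{diag}\{1/q_i\}$. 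In the standard Lewis--Qu formulation one takes $P=\mathrm{diag}\{p_i\}$ with $p=(L+B)^{-\top}\underline{\mathbf 1}_N$ and shows $P(L+B)+(L+B)^{\top}P>0$ exactly via the row-sum count $p_ib_i+1>0$; your sketch matches that proof only after replacing $1/q_i$ by $q_i$. Either flag this as a second typo in the lemma statement, or supply a separate argument for the $1/q_i$ scaling --- the diagonal-dominance route you describe does not cover it as written.
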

	Let the control signal of local node $i$ be given as 
	\begin{equation}
	\begin{aligned}u_{i}= & -G_{i}^{-1}\left(c{\bf E}_{i}+\hat{\theta}_{i}\left\Vert x_{i}\right\Vert _{\infty}+\hat{\omega}_{i}\right)\\
	& -G_{i}^{-1}(d_{i}+b_{i})^{-1}\Omega_{i}^{-1}\left(\lambda_{i,M_{p}-1}\varepsilon_{i}^{M_{p}}+\cdots+\lambda_{i}^{1}\varepsilon_{i}^{2}\right)
	\end{aligned}
	\label{eq:equa37}
	\end{equation}
	such that the global form of the control input in \eqref{eq:equa37}
	is defined as 
	\begin{equation}
	u=-G^{-1}\left(c{\bf E}+\hat{\theta}\left\Vert x\right\Vert _{\infty}+\hat{\omega}+\left(B+D\right)^{-1}R^{-1}\Phi_{2}\bar{\lambda}\right)\label{eq:equa38}
	\end{equation}
	where $\left\Vert x\right\Vert _{\infty}=\left[\left\Vert x_{1}\right\Vert _{\infty},\ldots,\left\Vert x_{N}\right\Vert _{\infty}\right]^{\top}\otimes{\bf \underline{1}}_{N\times1}$,
	${\bf \underline{1}}_{N\times1}=\left[1,\ldots,1\right]^{\top}\in\mathbb{R}^{N\times1}$
	and the control gain $c>0$ such that 
	\begin{equation}
	c>\frac{1}{\underline{\sigma}\left(Q\right)\underline{\sigma}\left(R\right)}\left(\frac{\gamma_{1}^{2}+\gamma_{2}^{2}}{k}+\frac{2}{\beta}g^{2}+\nu\right)\label{eq:equa41}
	\end{equation}
	and $Q$ is as defined in \eqref{eq:equa36}. The control variable
	$c$ satisfies $\gamma:=-\frac{1}{2}\Phi\bar{\sigma}\left(\mathcal{M}\right)\bar{\sigma}\left(R\right)\bar{\sigma}\left(A\right)$,\\
	$g:=-\frac{1}{2}\left(\bar{\sigma}\left(M\right)+\frac{\bar{\sigma}\left(\mathcal{M}\right)\bar{\sigma}\left(A\right)}{\underline{\sigma}(B+D)}\left\Vert \Lambda\right\Vert _{F}\left\Vert \bar{\lambda}\right\Vert \right)$,
	$\nu:=\frac{\bar{\sigma}\left(\mathcal{M}\right)\bar{\sigma}\left(A\right)}{\underline{\sigma}(B+D)}\left\Vert \bar{\lambda}\right\Vert $,
	and $M$ is as defined in \eqref{eq:equa25} for $\beta>0$. The adaptive
	estimates $\hat{\theta}$ and $\hat{\omega}$ are updated according
	to \eqref{eq:equa39} and \eqref{eq:equa40}. 
	\begin{equation}
	\dot{\hat{\theta}}_{i}=\Gamma_{1i}\left(d_{i}+b_{i}\right)r_{i}m_{i}{\bf E}_{i}\left\Vert x_{i}\right\Vert _{\infty}-k\Gamma_{1i}\hat{\theta}_{i}\label{eq:equa39}
	\end{equation}
	\begin{equation}
	\dot{\hat{\omega}}_{i}=\Gamma_{2i}\left(d_{i}+b_{i}\right)r_{i}m_{i}{\bf E}_{i}-k\Gamma_{2i}\hat{\omega}_{i}\label{eq:equa40}
	\end{equation}
	with $\Gamma_{1i},\Gamma_{2i}\in\mathbb{R}^{+}$ and $k>0$.
	\begin{thm}
		\label{theorem1} Let $L$ be an irreducible matrix and $B\neq0$
		such that $\left(L+B\right)$ is nonsingular. $c$ and $k$ are scalar
		design parameters defined in \eqref{eq:equa41}. Consider the distributed
		system in \eqref{eq:equa1} and the leader dynamics in \eqref{eq:equa3}
		coupled directly to the control input in \eqref{eq:equa37} with adaptive
		estimates \eqref{eq:equa39} and \eqref{eq:equa40}. If Assumptions
		\ref{assump2} and \ref{assump3} hold and the distributed control
		is as in \eqref{eq:equa38}. Then, the control node trajectory $x_{0}\left(t\right)$
		is cooperatively UUB and all nodes synchronize close to $x_{0}\left(t\right)$. 
	\end{thm}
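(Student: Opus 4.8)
The plan is to prove Theorem~\ref{theorem1} by a single Lyapunov argument that simultaneously penalizes the metric error ${\bf E}$, the lower-order transformed errors collected in $\Phi_1$, and the parameter estimation errors $\tilde\theta,\tilde\omega$. Concretely I would take
\[
V=\tfrac12\,{\bf E}^{\top}\mathcal{M}{\bf E}+\tfrac12\,\mathrm{Tr}\!\left\{\Phi_1 M\Phi_1^{\top}\right\}+\tfrac12\sum_{i=1}^{N}\tilde\theta_i^{\top}\Gamma_{1i}^{-1}\tilde\theta_i+\tfrac12\sum_{i=1}^{N}\tilde\omega_i^{\top}\Gamma_{2i}^{-1}\tilde\omega_i ,
\]
which is positive definite by Lemma~\ref{Lemma1} ($\mathcal{M}>0$) and by $M>0$ in \eqref{eq:equa25}; the weighting $\mathcal{M}$ is exactly what converts the non-symmetric product $\mathcal{M}(L+B)$ appearing after differentiation into the symmetric positive-definite $Q$ of \eqref{eq:equa36}. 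Before differentiating, I would first simplify $\dot{\bf E}$: substituting the global control \eqref{eq:equa38} into \eqref{eq:equa27}, using $f-\hat f=\tilde f=\tilde\theta\|x\|_\infty+\tilde\omega$ from \eqref{eq:equa31} and $L+B=(D+B)-A$, the term $(B+D)^{-1}R^{-1}\Phi_2\bar\lambda$ in the controller cancels $\Phi_2\bar\lambda$ in \eqref{eq:equa27} up to a residual $RA(D+B)^{-1}R^{-1}\Phi_2\bar\lambda$, leaving
\[
\dot{\bf E}=R(L+B)\big(\tilde f(x)-c{\bf E}-\underline f_0\big)+RA(D+B)^{-1}R^{-1}\Phi_2\bar\lambda+\Delta .
\]

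Differentiating $V$: the $-c{\bf E}$ contribution gives $-\tfrac{c}{2}{\bf E}^{\top}\big(\mathcal{M}R(L+B)+(L+B)^{\top}R\mathcal{M}\big){\bf E}$, which I would lower bound by $c\,\underline\sigma(Q)\,\underline\sigma(R)\|{\bf E}\|^2$ in magnitude, using that $R$ is diagonal positive definite and commutes with $\mathcal{M}$. The $\tilde\theta,\tilde\omega$ cross terms produced by $R(L+B)\tilde f$ are cancelled \emph{exactly} by the first parts of the adaptation laws \eqref{eq:equa39}--\eqref{eq:equa40} (this is why those laws carry the factors $(d_i+b_i)r_i m_i{\bf E}_i$); the $\sigma$-modification parts $-k\Gamma\hat\theta$ leave $k\tilde\theta^\top\hat\theta+k\tilde\omega^\top\hat\omega$, bounded below by $-\tfrac{k}{2}(\|\tilde\theta\|^2+\|\tilde\omega\|^2)+\tfrac{k}{2}(\|\theta\|^2+\|\omega\|^2)$ via $\hat\theta=\theta-\tilde\theta$, Young's inequality, and Assumption~\ref{assump3}. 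For the trace term I would use \eqref{eq:equa24}: $\mathrm{Tr}\{\Phi_1 M\dot\Phi_1^\top\}=\mathrm{Tr}\{\Phi_1 M\Lambda\Phi_1^\top\}+\mathrm{Tr}\{\Phi_1 M l{\bf E}^\top\}$, the first term symmetrizing through \eqref{eq:equa25} to $-\tfrac{\beta}{2}\|\Phi_1\|_F^2$. Everything remaining — the leader term $R(L+B)\underline f_0$ (bounded by $F_M$ and the $\bar\sigma$'s), the residual $RA(D+B)^{-1}R^{-1}\Phi_2\bar\lambda$ (bounded by $\bar\sigma(A)$, $\underline\sigma(B+D)$, $\|\bar\lambda\|$), the vanishing $\Delta$, and the cross term $\mathrm{Tr}\{\Phi_1 M l{\bf E}^\top\}$ — is split by Young's inequality into pieces absorbed by $-\tfrac{\beta}{2}\|\Phi_1\|_F^2$ and $-\tfrac{k}{2}\|\tilde\theta\|^2$ (and likewise for $\tilde\omega$), plus a remainder proportional to $\|{\bf E}\|^2$ equal to $\big(\tfrac{\gamma_1^2+\gamma_2^2}{k}+\tfrac{2}{\beta}g^2+\nu\big)\|{\bf E}\|^2$ in the notation of \eqref{eq:equa41}. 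With $c$ as in \eqref{eq:equa41} this yields $\dot V\le-\mu\|{\bf E}\|^2-\tfrac{\beta}{4}\|\Phi_1\|_F^2-\tfrac{k}{4}(\|\tilde\theta\|^2+\|\tilde\omega\|^2)+\varpi$ for some $\mu>0$ and constant $\varpi>0$, hence $\dot V\le-\alpha V+\varpi$, giving the UUB of ${\bf E}$, $\Phi_1$, $\tilde\theta$, $\tilde\omega$ in the sense of Definitions~\ref{def1}--\ref{def2}.

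Then I would translate this back. Boundedness of ${\bf E}$ with $\Phi_1$ gives, through \eqref{eq:equa23}, boundedness of $\varepsilon^{M_p}$ and hence of every transformed error; since each $\varepsilon_i^p$ stays finite, $e_i^p/\rho_i^p$ never reaches the prescribed bounds (properties P1--P3 and \eqref{eq:equa18}), so $e$ remains confined inside the PPF funnel \eqref{eq:equa12}--\eqref{eq:equa13} and is itself UUB with ultimate bound set by $\rho_\infty$. This also closes the argument self-consistently: as long as $\varepsilon$ is bounded, $R=\mathrm{diag}\{\Omega_i\}$ stays bounded and invertible, so $V$, $\dot V$ are well defined and no finite-time escape of $\varepsilon$ occurs. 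Finally \eqref{eq:equa10} gives $\|x^1-\underline x_0^1\|\le\|e\|/\underline\sigma(L+B)$, so every follower's disagreement from the leader is ultimately small, and boundedness of the remaining signals follows from \eqref{eq:equa2}--\eqref{eq:equa3} and Assumptions~\ref{assump2}--\ref{assump3}; thus $x_0(t)$ is cooperatively UUB and all nodes synchronize close to $x_0(t)$. The hard part will be the bookkeeping around the non-commuting factor $R(L+B)$ under the $\mathcal{M}$-weighting: extracting a clean sign-definite bound from $-c\,{\bf E}^{\top}\mathcal{M}R(L+B){\bf E}$ requires the $Q$-symmetrization of Lemma~\ref{Lemma1} to interact correctly with the diagonal $R$, while simultaneously the $\Phi_2\bar\lambda$-residual and the $\Phi_1$--${\bf E}$ cross term must be dominated; verifying rigorously that $\Delta\to0$ and that it does not interfere with this domination is the other delicate point.
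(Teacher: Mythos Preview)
Your Lyapunov candidate and overall architecture match the paper's, but there is one genuine gap. You assert that ``the $\tilde\theta,\tilde\omega$ cross terms produced by $R(L+B)\tilde f$ are cancelled \emph{exactly} by the first parts of the adaptation laws.'' They are not. The adaptation laws \eqref{eq:equa39}--\eqref{eq:equa40} carry only the \emph{diagonal} factor $(d_i+b_i)r_i m_i$, so after writing $L+B=(B+D)-A$ you cancel ${\bf E}^{\top}\mathcal{M}R(B+D)\tilde\theta\|x\|_\infty$ and ${\bf E}^{\top}\mathcal{M}R(B+D)\tilde\omega$, but the off-diagonal pieces $-{\bf E}^{\top}\mathcal{M}RA\tilde\theta\|x\|_\infty$ and $-{\bf E}^{\top}\mathcal{M}RA\tilde\omega$ survive. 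These residual cross terms are precisely the source of the constants $\gamma_1=-\tfrac12\bar\sigma(\mathcal{M})\bar\sigma(R)\bar\sigma(A)x_M$ and $\gamma_2=-\tfrac12\bar\sigma(\mathcal{M})\bar\sigma(R)\bar\sigma(A)$ in \eqref{eq:equa41}; without them your list of ``everything remaining'' cannot produce the $(\gamma_1^2+\gamma_2^2)/k$ term you quote, and your Young-splitting budget does not close. You need to keep these $\|\tilde\theta\|\,\|{\bf E}\|$ and $\|\tilde\omega\|\,\|{\bf E}\|$ cross terms and absorb them against $-k\|\tilde\theta\|^2$, $-k\|\tilde\omega\|^2$ (this is where the $1/k$ appears). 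Note also that bounding $\|x\|_\infty$ by some $x_M$ is itself circular unless you argue it via the prescribed-performance confinement first.

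Apart from that omission, your route is essentially the paper's; the only organizational difference is that the paper packages the final inequality as $\dot V\le -z^{\top}\mathcal{H}z+h^{\top}z$ with $z=\big[\|\Phi_1\|,\|\tilde\theta\|,\|\tilde\omega\|,\|{\bf E}\|\big]^{\top}$ and checks $\mathcal{H}>0$ by Sylvester's criterion to obtain \eqref{eq:equa41}, whereas you go directly to $\dot V\le-\alpha V+\varpi$ via repeated Young inequalities. Both yield the same gain condition once the missing $A$-residual cross terms are restored.
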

	\begin{proof} Based on \eqref{eq:equa36} in Lemma \ref{Lemma1},
		the error function in \eqref{eq:equa7} then becomes 
		\[
		\dot{e}^{M_{p}}=\left(L+B\right)\left(\theta\left\Vert x\right\Vert _{\infty}+\omega+Gu-\underline{f}_{0}\left(x_{0},t\right)\right)
		\]
		and can be rewritten as 
		\begin{equation}
		\begin{aligned}\dot{e}=\left(L+B\right)\big( & \tilde{\theta}\left\Vert x\right\Vert _{\infty}+\tilde{\omega}-c{\bf E}-\left(B+D\right)^{-1}R^{-1}\Phi_{2}\bar{\lambda}\\
		& -\underline{f}\left(x_{0},t\right)\big)
		\end{aligned}
		\label{eq:equa42}
		\end{equation}
		and from \eqref{eq:equa27}, the transformed error then yields 
		\begin{equation}
		\begin{aligned}\dot{{\bf E}}=R\left(L+B\right)\big( & \tilde{\theta}\left\Vert x\right\Vert _{\infty}+\tilde{\omega}-c{\bf E}-\left(B+D\right)^{-1}R^{-1}\Phi_{2}\bar{\lambda}\\
		& -\underline{f}\left(x_{0},t\right)\big)+\Delta+\Phi_{2}\bar{\lambda}
		\end{aligned}
		\label{eq:equa43}
		\end{equation}
		%\subsection{Lyapunov Functions of Distributed Control }
		By considering the following Lyapunov candidate function 
		\begin{equation}
		\begin{aligned}V & =\frac{1}{2}{\bf E}^{\top}\mathcal{M}{\bf E}+\frac{1}{2}\tilde{\theta}^{\top}\Gamma_{1}^{-1}\tilde{\theta}+\frac{1}{2}\tilde{\omega}^{\top}\Gamma_{2}^{-1}\tilde{\omega}+\frac{1}{2}{\rm Tr}\left\{ \Phi_{1}M\Phi_{1}^{\top}\right\} \\
		& =V_{1}+V_{2}+V_{3}+V_{4}
		\end{aligned}
		\label{eq:equa44}
		\end{equation}
		with $\mathcal{M}>0$ is defined in Lemma \ref{Lemma1}, $\Gamma_{1i},\Gamma_{2i}\in\mathbb{R}^{P\times P}$
		were mentioned in \eqref{eq:equa39} and \eqref{eq:equa40}, respectively,
		such that $\Gamma_{1}:={\rm diag}\left\{ \Gamma_{1i}\right\} $, $\Gamma_{2}:={\rm diag}\left\{ \Gamma_{2i}\right\} $
		and strictly positive. Let $V_{1}:=\frac{1}{2}{\bf E}^{\top}\mathcal{M}{\bf E}$,
		$V_{2}:=\frac{1}{2}\tilde{\theta}^{\top}\Gamma_{1}^{-1}\tilde{\theta}$,
		$V_{3}:=\frac{1}{2}\tilde{\omega}^{\top}\Gamma_{2}^{-1}\tilde{\omega}$
		and $V_{4}:=\frac{1}{2}{\rm Tr}\left\{ \Phi_{1}M\Phi_{1}^{\top}\right\} $
		be defined in \eqref{eq:equa44} and $\Gamma_{1}^{-1}$ and $\Gamma_{2}^{-1}$
		are block diagonal matrices as in \eqref{eq:equa39} and \eqref{eq:equa40},
		respectively. For simplicity, let $\Gamma_{1i}=\Gamma_{2i}$. After
		algebraic manipulations and substitution of \eqref{eq:equa39} and
		\eqref{eq:equa40}, 
		\begin{equation}
		\dot{V_{1}}+\dot{V_{2}}+\dot{V_{3}}={\bf E}^{\top}\mathcal{M}\dot{{\bf E}}+\tilde{\theta}^{\top}\Gamma^{-1}\dot{\tilde{\theta}}+\tilde{\omega}^{\top}\Gamma^{-1}\dot{\tilde{\omega}}\label{eq:equa45}
		\end{equation}
	\end{proof} 
	\begin{equation}
	\begin{split}\dot{V_{1}} & +\dot{V_{2}}+\dot{V_{3}}\\
	& =-c{\bf E}^{\top}\mathcal{M}R\left(L+B\right){\bf E}+{\bf E}^{\top}\mathcal{M}R\left(B+D\right)\tilde{\theta}\left\Vert x\right\Vert _{\infty}\\
	& +{\bf E}^{\top}\mathcal{M}R\left(B+D\right)\tilde{\omega}+{\bf E}^{\top}\mathcal{M}RA\left(B+D\right)^{-1}R^{-1}\Phi_{2}\bar{\lambda}\\
	& -{\bf E}^{\top}\mathcal{M}R\left(L+B\right)\underline{f}\left(x_{0},t\right)-{\bf E}^{\top}\mathcal{M}RA\tilde{\theta}\left\Vert x\right\Vert _{\infty}\\
	& -{\bf E}^{\top}\mathcal{M}RA\tilde{\omega}+{\bf E}^{\top}\mathcal{M}\Delta+\tilde{\theta}^{\top}\Gamma_{1}^{-1}\dot{\tilde{\theta}}+\tilde{\omega}^{\top}\Gamma_{2}^{-1}\dot{\tilde{\omega}}
	\end{split}
	\label{eq:equa46}
	\end{equation}
	since $\dot{\tilde{\theta}}=\dot{\theta}-\dot{\hat{\theta}}=-\dot{\hat{\theta}}$
	and $\dot{\tilde{\omega}}=\dot{\omega}-\dot{\hat{\omega}}=-\dot{\hat{\omega}}$,
	one can write \eqref{eq:equa46} as 
	\begin{equation}
	\begin{split}\dot{V_{1}} & +\dot{V_{2}}+\dot{V_{3}}\\
	& =-c{\bf E}^{\top}\mathcal{M}R\left(L+B\right){\bf E}+{\bf E}^{\top}\mathcal{M}R\left(B+D\right)\tilde{\theta}\left\Vert x\right\Vert _{\infty}\\
	& +{\bf E}^{\top}\mathcal{M}R\left(B+D\right)\tilde{\omega}+{\bf E}^{\top}\mathcal{M}RA\left(B+D\right)^{-1}R^{-1}\Phi_{2}\bar{\lambda}\\
	& -{\bf E}^{\top}\mathcal{M}R\left(L+B\right)\underline{f}\left(x_{0},t\right)+{\bf E}^{\top}\mathcal{M}\Delta-{\bf E}^{\top}\mathcal{M}RA\tilde{\theta}\left\Vert x\right\Vert _{\infty}\\
	& -\tilde{\theta}^{\top}\Gamma_{1}^{-1}\left(\Gamma_{1}\left(B+D\right)R\mathcal{M}{\bf E}\left\Vert x\right\Vert _{\infty}-k\Gamma_{1}\hat{\theta}\right)\\
	& -\tilde{\omega}^{\top}\Gamma_{2}^{-1}\left(\Gamma_{2}\left(B+D\right)R\mathcal{M}{\bf E}-k\Gamma_{2}\hat{\omega}\right)-{\bf E}^{\top}\mathcal{M}RA\tilde{\omega}
	\end{split}
	\label{eq:equa47}
	\end{equation}
	Note that $x^{\top}y={\rm Tr}\left\{ yx^{\top}\right\} $, $\forall x,y\in\mathbb{R}^{N}$,
	then $\dot{V_{1}}+\dot{V_{2}}+\dot{V_{3}}$ can be written as 
	\begin{equation}
	\begin{split}\dot{V_{1}}+\dot{V_{2}} & +\dot{V_{3}}=-c{\bf E}^{\top}RQ{\bf E}+{\bf E}^{\top}\mathcal{M}R\left(B+D\right)\tilde{\theta}\left\Vert x\right\Vert _{\infty}\\
	& -{\bf E}^{\top}\mathcal{M}R\left(L+B\right)\underline{f}\left(x_{0},t\right)+{\bf E}^{\top}\mathcal{M}R\left(B+D\right)\tilde{\omega}\\
	& +{\bf E}^{\top}\mathcal{M}RA\left(B+D\right)^{-1}R^{-1}\Phi_{2}\bar{\lambda}+{\bf E}^{\top}\mathcal{M}\Delta\\
	& -{\bf E}^{\top}\mathcal{M}RA\tilde{\theta}\left\Vert x\right\Vert _{\infty}-{\bf E}^{\top}\mathcal{M}R\left(B+D\right)\left\Vert x\right\Vert _{\infty}\tilde{\theta}\\
	& +k\hat{\theta}^{\top}\tilde{\theta}-{\bf E}^{\top}\mathcal{M}R\left(B+D\right)\tilde{\omega}+k\hat{\omega}^{\top}\tilde{\omega}\\
	& -{\bf E}^{\top}\mathcal{M}RA\tilde{\omega}
	\end{split}
	\label{eq:equa48}
	\end{equation}
	Also, $\tilde{\theta}=\theta-\hat{\theta}$ and $\tilde{\omega}=\omega-\hat{\omega}$
	and by substituting \eqref{eq:equa25} with \eqref{eq:equa48}, we
	end up with 
	\begin{equation}
	\begin{split}\dot{V_{1}}+\dot{V_{2}} & +\dot{V_{3}}=-c{\bf E}^{\top}RQ{\bf E}\bar{\lambda}+{\bf E}^{\top}\mathcal{M}\Delta\\
	& +{\bf E}^{\top}\mathcal{M}RA\left(B+D\right)^{-1}R^{-1}\Phi_{1}\Lambda^{\top}\\
	& +{\bf E}^{\top}\mathcal{M}RA\left(B+D\right)^{-1}R^{-1}{\bf E}l^{\top}\bar{\lambda}\\
	& -{\bf E}^{\top}\mathcal{M}R\left(L+B\right)\underline{f}\left(x_{0},t\right)\\
	& -{\bf E}^{\top}\mathcal{M}RA\tilde{\theta}\left\Vert x\right\Vert _{\infty}+k\theta^{\top}\tilde{\theta}-k\tilde{\theta}^{\top}\tilde{\theta}\\
	& +k\omega^{\top}\tilde{\omega}-k\tilde{\omega}^{\top}\tilde{\omega}-{\bf E}^{\top}\mathcal{M}RA\tilde{\omega}
	\end{split}
	\label{eq:equa49}
	\end{equation}
	then $\eqref{eq:equa49}$ can be rewritten in inequality form using
	the second norm as %\begin{equation}
	%\begin{split}\dot{V_{1}}+\dot{V_{2}}+\dot{V_{3}}\leq & -c\underline{\sigma}\left(R\right)\underline{\sigma}\left(Q\right)\left\Vert {\bf E}\right\Vert ^{2}+\frac{\bar{\sigma}\left(\mathcal{M}\right)\bar{\sigma}\left(A\right)}{\underline{\sigma}\left(B+D\right)}\left\Vert \Lambda\right\Vert _{F}\left\Vert\Phi_{1}\right\Vert\left\Vert {\bf E}\right\Vert +\frac{\bar{\sigma}\left(\mathcal{M}\right)\bar{\sigma}\left(A\right)}{\underline{\sigma}\left(B+D\right)}\left\Vert \bar{\lambda}\right\Vert \left\Vert l\right\Vert \left\Vert {\bf E}\right\Vert ^{2}\\
	% & +\bar{\sigma}\left(\mathcal{M}\right)\bar{\sigma}\left(\Delta\right)\left\Vert {\bf E}\right\Vert +\bar{\sigma}\left(\mathcal{M}\right)\bar{\sigma}\left(R\right)\bar{\sigma}\left(L+B\right)\underline{f}_{M}\left\Vert {\bf E}\right\Vert \\
	% & +\bar{\sigma}\left(\mathcal{M}\right)\bar{\sigma}\left(R\right)\bar{\sigma}\left(A\right)x_{M}\left\Vert \tilde{\theta}\right\Vert \left\Vert {\bf E}\right\Vert +k\theta_{M}\left\Vert \tilde{\theta}\right\Vert -k\left\Vert \tilde{\theta}\right\Vert ^{2}+k\omega_{M}\left\Vert \tilde{\omega}\right\Vert -k\left\Vert \tilde{\omega}\right\Vert ^{2}\\
	% & +\bar{\sigma}\left(\mathcal{M}\right)\bar{\sigma}\left(R\right)\bar{\sigma}\left(A\right)\left\Vert \tilde{\omega}\right\Vert \left\Vert {\bf E}\right\Vert
	%\end{split}
	%\label{eq:equa50}
	%\end{equation}
	\begin{equation}
	\begin{split}\dot{V_{1}} & +\dot{V_{2}}+\dot{V_{3}}\leq\\
	& -c\underline{\sigma}\left(R\right)\underline{\sigma}\left(Q\right)\left\Vert {\bf E}\right\Vert ^{2}+\frac{\bar{\sigma}\left(\mathcal{M}\right)\bar{\sigma}\left(A\right)}{\underline{\sigma}\left(B+D\right)}\left\Vert \Lambda\right\Vert _{F}\left\Vert \Phi_{1}\right\Vert \left\Vert {\bf E}\right\Vert \\
	& +\frac{\bar{\sigma}\left(\mathcal{M}\right)\bar{\sigma}\left(A\right)}{\underline{\sigma}\left(B+D\right)}\left\Vert \bar{\lambda}\right\Vert \left\Vert l\right\Vert \left\Vert {\bf E}\right\Vert ^{2}+\bar{\sigma}\left(\mathcal{M}\right)\bar{\sigma}\left(\Delta\right)\left\Vert {\bf E}\right\Vert \\
	& +\bar{\sigma}\left(\mathcal{M}\right)\bar{\sigma}\left(R\right)\bar{\sigma}\left(L+B\right)\underline{f}_{M}\left\Vert {\bf E}\right\Vert \\
	& +\bar{\sigma}\left(\mathcal{M}\right)\bar{\sigma}\left(R\right)\bar{\sigma}\left(A\right)x_{M}\left\Vert \tilde{\theta}\right\Vert \left\Vert {\bf E}\right\Vert +k\theta_{M}\left\Vert \tilde{\theta}\right\Vert -k\left\Vert \tilde{\theta}\right\Vert ^{2}\\
	& +k\omega_{M}\left\Vert \tilde{\omega}\right\Vert -k\left\Vert \tilde{\omega}\right\Vert ^{2}+\bar{\sigma}\left(\mathcal{M}\right)\bar{\sigma}\left(R\right)\bar{\sigma}\left(A\right)\left\Vert \tilde{\omega}\right\Vert \left\Vert {\bf E}\right\Vert 
	\end{split}
	\label{eq:equa51}
	\end{equation}
	Also, from \eqref{eq:equa500}, \eqref{eq:equa501} and \eqref{eq:equa44},
	the derivative of the fourth Lyapunov term $V_{4}$ is defined by
	\begin{equation}
	\begin{split}\dot{V_{4}}=\frac{1}{2}{\rm Tr}\left\{ \dot{\Phi_{1}}M\Phi_{1}^{\top}+\Phi_{1}M\dot{\Phi_{1}}\right\} ={\rm Tr}\left\{ \Phi_{2}M\Phi_{1}^{\top}\right\} \end{split}
	\label{eq:equa52}
	\end{equation}
	substituting \eqref{eq:equa24} in \eqref{eq:equa52} yields 
	\begin{equation}
	\begin{split}\dot{V_{4}} & ={\rm Tr}\left\{ \Phi_{1}\left(M\Lambda+\Lambda^{\top}M\right)\Phi_{1}^{\top}\right\} +{\rm Tr}\left\{ {\bf E}l^{\top}M\Phi_{1}^{\top}\right\} \\
	& =-\frac{1}{2}\beta{\rm Tr}\left\{ \Phi_{1}\Phi_{1}^{\top}\right\} +{\rm Tr}\left\{ {\bf E}l^{\top}M\Phi_{1}^{\top}\right\} 
	\end{split}
	\label{eq:equa54}
	\end{equation}
	Now, we put \eqref{eq:equa51} and \eqref{eq:equa54} in the complete
	form 
	\begin{equation}
	\begin{split}\dot{V}\leq & -c\underline{\sigma}\left(R\right)\underline{\sigma}\left(Q\right)\left\Vert {\bf E}\right\Vert ^{2}+\frac{\bar{\sigma}\left(\mathcal{M}\right)\bar{\sigma}\left(A\right)}{\underline{\sigma}\left(B+D\right)}\left\Vert \Lambda\right\Vert _{F}\left\Vert \Phi_{1}\right\Vert \left\Vert {\bf E}\right\Vert \\
	& +\frac{\bar{\sigma}\left(\mathcal{M}\right)\bar{\sigma}\left(A\right)}{\underline{\sigma}\left(B+D\right)}\left\Vert \bar{\lambda}\right\Vert \left\Vert l\right\Vert \left\Vert {\bf E}\right\Vert ^{2}+\bar{\sigma}\left(\mathcal{M}\right)\bar{\sigma}\left(\Delta\right)\left\Vert {\bf E}\right\Vert \\
	& +\bar{\sigma}\left(\mathcal{M}\right)\left(\bar{\sigma}\left(R\right)\bar{\sigma}\left(L+B\right)\underline{f}_{M}+\bar{\sigma}\left(R\right)\bar{\sigma}\left(A\right)x_{M}\left\Vert \tilde{\theta}\right\Vert \right)\left\Vert {\bf E}\right\Vert \\
	& +k\theta_{M}\left\Vert \tilde{\theta}\right\Vert -k\left\Vert \tilde{\theta}\right\Vert ^{2}+k\omega_{M}\left\Vert \tilde{\omega}\right\Vert -k\left\Vert \tilde{\omega}\right\Vert ^{2}\\
	& +\bar{\sigma}\left(\mathcal{M}\right)\bar{\sigma}\left(R\right)\bar{\sigma}\left(A\right)\left\Vert \tilde{\omega}\right\Vert \left\Vert {\bf E}\right\Vert -\frac{1}{2}\beta\left\Vert \Phi_{1}\right\Vert ^{2}\\
	& +\bar{M}\left\Vert l\right\Vert \left\Vert \Phi_{1}\right\Vert \left\Vert {\bf E}\right\Vert 
	\end{split}
	\label{eq:equa55}
	\end{equation}
	with $\left\Vert l\right\Vert =1$ 
	\begin{equation}
	\begin{split}\dot{V}\leq & -\left(c\underline{\sigma}\left(R\right)\underline{\sigma}\left(Q\right)-\frac{\bar{\sigma}\left(\mathcal{M}\right)\bar{\sigma}\left(A\right)}{\underline{\sigma}\left(B+D\right)}\left\Vert \bar{\lambda}\right\Vert \right)\left\Vert {\bf E}\right\Vert ^{2}\\
	& +\left(\bar{M}+\frac{\bar{\sigma}\left(\mathcal{M}\right)\bar{\sigma}\left(A\right)}{\underline{\sigma}\left(B+D\right)}\left\Vert \Lambda\right\Vert _{F}\right)\left\Vert \Phi_{1}\right\Vert \left\Vert {\bf E}\right\Vert \\
	& +\bar{\sigma}\left(\mathcal{M}\right)\left(\bar{\sigma}\left(\Delta\right)+\bar{\sigma}\left(R\right)\bar{\sigma}\left(L+B\right)\underline{f}_{M}\right)\left\Vert {\bf E}\right\Vert \\
	& +\bar{\sigma}\left(\mathcal{M}\right)\bar{\sigma}\left(R\right)\bar{\sigma}\left(A\right)x_{M}\left\Vert \tilde{\theta}\right\Vert \left\Vert {\bf E}\right\Vert \\
	& +k\theta_{M}\left\Vert \tilde{\theta}\right\Vert -k\left\Vert \tilde{\theta}\right\Vert ^{2}+k\omega_{M}\left\Vert \tilde{\omega}\right\Vert -k\left\Vert \tilde{\omega}\right\Vert ^{2}\\
	& +\bar{\sigma}\left(\mathcal{M}\right)\bar{\sigma}\left(R\right)\bar{\sigma}\left(A\right)\left\Vert \tilde{\omega}\right\Vert \left\Vert {\bf E}\right\Vert -\frac{1}{2}\beta\left\Vert \Phi_{1}\right\Vert ^{2}
	\end{split}
	\label{eq:equa56}
	\end{equation}
	Let us define
	
	\begin{align*}
	\gamma_{1} & =-\frac{1}{2}\bar{\sigma}\left(\mathcal{M}\right)\bar{\sigma}\left(R\right)\bar{\sigma}\left(A\right)x_{M}\\
	\gamma_{2} & =-\frac{1}{2}\bar{\sigma}\left(\mathcal{M}\right)\bar{\sigma}\left(R\right)\bar{\sigma}\left(A\right)\\
	g & =-\frac{1}{2}\left(\bar{\sigma}\left(M\right)+\frac{\bar{\sigma}\left(\mathcal{M}\right)\bar{\sigma}\left(A\right)}{\underline{\sigma}\left(B+D\right)}\left\Vert \Lambda\right\Vert _{F}\left\Vert \bar{\lambda}\right\Vert \right)\\
	\nu & =\frac{\bar{\sigma}\left(\mathcal{M}\right)\bar{\sigma}\left(A\right)}{\underline{\sigma}(B+D)}\left\Vert \bar{\lambda}\right\Vert \\
	\mu & =\left(c\underline{\sigma}\left(R\right)\underline{\sigma}\left(Q\right)-\frac{\bar{\sigma}\left(\mathcal{M}\right)\bar{\sigma}\left(A\right)}{\underline{\sigma}\left(B+D\right)}\right)
	\end{align*}
	hence, the inequality in \eqref{eq:equa56} can be expressed as
	\begin{equation}
	\begin{split}\dot{V}\leq & -\left[\begin{smallmatrix}\left\Vert \Phi_{1}\right\Vert  & \left\Vert \tilde{\theta}\right\Vert  & \left\Vert \tilde{\omega}\right\Vert  & \left\Vert {\bf E}\right\Vert \end{smallmatrix}\right]\left[\begin{smallmatrix}\frac{1}{2}\beta & 0 & 0 & g\\
	0 & k & 0 & \gamma_{1}\\
	0 & 0 & k & \gamma_{2}\\
	g & \gamma_{1} & \gamma_{2} & \mu
	\end{smallmatrix}\right]\left[\begin{smallmatrix}\left\Vert \Phi_{1}\right\Vert \\
	\left\Vert \tilde{\theta}\right\Vert \\
	\left\Vert \tilde{\omega}\right\Vert \\
	\left\Vert {\bf E}\right\Vert 
	\end{smallmatrix}\right]\\
	& +\left[\begin{smallmatrix}0 & k\theta_{M} & k\omega_{M} & \bar{\sigma}\left(\mathcal{M}\right)\left(\bar{\sigma}\left(\Delta\right)+\bar{\sigma}\left(R\right)\bar{\sigma}\left(L+B\right)\underline{f}_{M}\right)\end{smallmatrix}\right]\left[\begin{smallmatrix}\left\Vert \Phi_{1}\right\Vert \\
	\left\Vert \tilde{\theta}\right\Vert \\
	\left\Vert \tilde{\omega}\right\Vert \\
	\left\Vert {\bf E}\right\Vert 
	\end{smallmatrix}\right]
	\end{split}
	\label{eq:equa57}
	\end{equation}
	Define $z=\left[\begin{smallmatrix}\left\Vert \Phi_{1}\right\Vert  & \left\Vert \tilde{\theta}\right\Vert  & \left\Vert \tilde{\omega}\right\Vert  & \left\Vert {\bf E}\right\Vert \end{smallmatrix}\right]^{\top}$,
	$h=\left[\begin{smallmatrix}0 & k\theta_{M} & k\omega_{M} & \bar{\sigma}\left(\mathcal{M}\right)\left(\bar{\sigma}\left(\Delta\right)+\bar{\sigma}\left(R\right)\bar{\sigma}\left(L+B\right)\underline{f}_{M}\right)\end{smallmatrix}\right]^{\top}$
	and 
	\[
	\mathcal{H}=\begin{bmatrix}\frac{1}{2}\beta & 0 & 0 & g\\
	0 & k & 0 & \gamma_{1}\\
	0 & 0 & k & \gamma_{2}\\
	g & \gamma_{1} & \gamma_{2} & \mu
	\end{bmatrix}
	\]
	such that \eqref{eq:equa57} can be written in simpler form as 
	\begin{equation}
	\begin{split}\dot{V}\leq & -z^{\top}\mathcal{H}z+h^{\top}z\end{split}
	\label{eq:equa58}
	\end{equation}
	then, we have $\dot{V}\leq0$ if and only if $\mathcal{H}$ is positive
	definite and 
	\begin{equation}
	\begin{split}\left\Vert z\right\Vert > & \frac{\left\Vert h\right\Vert }{\underline{\sigma}(\mathcal{H})}\end{split}
	\label{eq:equa59}
	\end{equation}
	According to Sylvester's criterion, $\mathcal{H}>0$ if 
	\begin{enumerate}
		\item $\beta>0$ 
		\item $\beta k>0$ 
		\item $\beta k^{2}>0$ 
		\item $k(\beta\mu-2g^{2})-\beta(\gamma_{1}^{2}+\gamma_{2}^{2})>0$ 
	\end{enumerate}
	Solving the foregoing equations show \eqref{eq:equa41} 
	\[
	\begin{split}c>\frac{1}{\underline{\sigma}\left(Q\right)\underline{\sigma}\left(R\right)}\left(\frac{\gamma_{1}^{2}+\gamma_{2}^{2}}{k}+\frac{2}{\beta}g^{2}+\nu\right)\end{split}
	\]
	Then, with the assumption that 
	\begin{equation}
	\begin{split}\eta=\frac{k\theta_{M}+k\omega_{M}+\bar{\sigma}\left(\mathcal{M}\right)\left(\bar{\sigma}\left(\Delta\right)+\bar{\sigma}\left(R\right)\bar{\sigma}\left(L+B\right)\underline{f}_{M}\right)}{\underline{\sigma}\left(\mathcal{H}\right)}\end{split}
	\label{eq:equa60}
	\end{equation}
	we have $\dot{V}\leq0$ if $\left\Vert z\right\Vert >\eta$. In accordance
	with \eqref{eq:equa44}, we have 
	\begin{equation}
	\begin{aligned}\frac{1}{2}z^{\top} & \begin{bmatrix}\underline{\sigma}\left(M\right) & 0 & 0 & 0\\
	0 & \bar{\sigma}\left(\Gamma_{1}\right) & 0 & 0\\
	0 & 0 & \bar{\sigma}\left(\Gamma_{2}\right) & 0\\
	0 & 0 & 0 & \underline{\sigma}\left(\mathcal{M}\right)
	\end{bmatrix}z\leq V\leq\\
	& \frac{1}{2}z^{\top}\begin{bmatrix}\bar{\sigma}\left(M\right) & 0 & 0 & 0\\
	0 & \underline{\sigma}\left(\Gamma_{1}\right) & 0 & 0\\
	0 & 0 & \underline{\sigma}\left(\Gamma_{2}\right) & 0\\
	0 & 0 & 0 & \bar{\sigma}\left(\mathcal{M}\right)
	\end{bmatrix}z
	\end{aligned}
	\label{eq:equa61}
	\end{equation}
	By defining appropriate matching variables, \eqref{eq:equa61} becomes
	\begin{equation}
	\frac{1}{2}z^{\top}\underline{\mathcal{X}}z\leq V\leq\frac{1}{2}z^{\top}\bar{\mathcal{X}}z\label{eq:equa62}
	\end{equation}
	which is equivalent to 
	\begin{equation}
	\begin{split}\frac{1}{2}\underline{\sigma}\left(\underline{\mathcal{X}}\right)\left\Vert z\right\Vert ^{2}\leq V\leq\frac{1}{2}\bar{\sigma}\left(\bar{\mathcal{X}}\right)\left\Vert z\right\Vert ^{2}\end{split}
	\label{eq:equa63}
	\end{equation}
	then, 
	\begin{equation}
	\begin{split}V>\frac{1}{2}\bar{\sigma}\left(\bar{\mathcal{X}}\right)\frac{\left\Vert h\right\Vert ^{2}}{\underline{\sigma}^{2}(\mathcal{\mathcal{H}})}\end{split}
	\label{eq:equa64}
	\end{equation}
	Hence, based on Theorem 4.18 in \cite{khalil_nonlinear_2002}, for
	any the initial value $z\left(t_{0}\right)$ there exists $T_{0}$
	such that 
	\begin{equation}
	\begin{split}z\left(t\right)<\sqrt{\frac{\bar{\sigma}\left(\bar{\mathcal{X}}\right)}{\underline{\sigma}\left(\underline{\mathcal{X}}\right)}}\eta,\forall t\geq t_{0}+T_{0}\end{split}
	\label{eq:equa65}
	\end{equation}
	where time $T_{0}$ is evaluated using 
	\begin{equation}
	\begin{split}T_{0}=\frac{V\left(t_{0}\right)-\bar{\sigma}\left(\mathcal{X}\right)\eta^{2}}{k}\end{split}
	\label{eq:equa66}
	\end{equation}
	Also from \eqref{eq:equa66}, we have the relations 
	\begin{equation}
	\begin{split}\left\Vert z\right\Vert \leq\sqrt{\frac{2V}{\underline{\sigma}\left(\underline{\mathcal{X}}\right)}},\hspace{10pt}\left\Vert z\right\Vert \geq\sqrt{\frac{2V}{\bar{\sigma}\left(\bar{\mathcal{X}}\right)}}\end{split}
	\label{eq:equa67}
	\end{equation}
	Then, equation \eqref{eq:equa57} can be written as 
	\begin{equation}
	\dot{V}\leq-\tau_{1}V+\tau_{2}\sqrt{V}\label{eq:equa68}
	\end{equation}
	with $\tau_{1}:=\frac{2\underline{\sigma}\left(\mathcal{H}\right)}{\bar{\sigma}\left(\bar{\mathcal{X}}\right)}$
	and $\tau_{2}:=\frac{\sqrt{2}\left\Vert h\right\Vert }{\sqrt{\underline{\sigma}\left(\underline{\mathcal{X}}\right)}}$
	leading to 
	\begin{equation}
	\sqrt{V}\leq\sqrt{V\left(0\right)}+\frac{\tau_{2}}{\tau_{1}}\label{eq:equa69}
	\end{equation}
	Therefore $\varepsilon$ is $\mathcal{L}_{\infty}$ and contained
	for $t>t_{0}$ in the compact set $\Psi_{0}=\left\{ \varepsilon\left(t\right)\left|\left\Vert \varepsilon\left(t\right)\right\Vert \leq r_{t_{0}}\right.\right\} $
	as stated in Theorem \ref{theorem1}. Hence, $e\left(t\right)$ will
	satisfy the prescribed performance $\forall t$ if we start at $t=t_{0}$
	within the prescribed functions. 
	\begin{rem}
		\textbf{(Notes on control design parameters)} $\bar{\delta}_{i}$
		and $\underline{\delta}_{i}$ define the dynamic boundaries of the
		initial (large) and final (small) set and they have a significant
		impact on the control effort up to small error tracking which is bounded
		by $\rho_{i\infty}$. Higher values of $\bar{\delta}_{i}$ and $\underline{\delta}_{i}$
		require more time for the systematic convergence from large to small
		set. Whereas, the impact of $\ell_{i}$ can be noticed on the speed
		of convergence from large to small sets that implies values of $\ell_{i}$
		has a direct impact on the range of control signal. It should be remarked
		that $k$ and $\Gamma_{i}^{p}$ are associated with nonlinearity compensation
		of adaptive estimate, $c$ controls the speed of convergence to the
		desired tracking output and has to be selected to satisfy \eqref{eq:equa41}.
		The bounds on local consensus modified errors can be made small by
		increasing the control gains $c$. 
	\end{rem}
	Finally, the algorithm of nonlinear high order agent dynamics such
	as equation \eqref{eq:equa1} can be summarized briefly as 
	\begin{enumerate}
		\item[Step 1.] Select the control design parameters such as $\bar{\delta}_{i}^{p}$,
		$\underline{\delta}_{i}^{p}$, $\rho_{i,p\infty}$, $\ell_{i}^{p}$,
		$\Gamma_{i}^{p}$, $k$ and $c$. 
		\item[Step 2.] Evaluate local error synchronization $e_{i}$ from equation \eqref{eq:equa5}. 
		\item[Step 3.] Evaluate the prescribed performance function $\rho_{i}$ from equation
		\eqref{eq:equa11}. 
		\item[Step 4.] Evaluate $r_{i}^{p}$ from equation \eqref{eq:equa20}. 
		\item[Step 5.] Evaluate the transformed error from equation \eqref{eq:equa18} starting
		from $\varepsilon^{1}$ to $\varepsilon^{M_{p}}$. 
		\item[Step 6.] Evaluate the metric error ${\bf E}_{i}$ from \eqref{eq:equa21}
		or \eqref{eq:equa22}. 
		\item[Step 7.] Evaluate control signal $u_{i}$ from equation \eqref{eq:equa37}. 
		\item[Step 8.] Evaluate the adaptive estimates $\hat{\theta}_{i}$ and $\hat{\omega}_{i}$
		from equations \eqref{eq:equa39} and \eqref{eq:equa40}, respectively. 
		\item[Step 9.] Go to Step 2. 
	\end{enumerate}
	
	\section{Simulation Results \label{sec7}}
	
	In this section, we present two different examples to illustrate the
	robustness of the proposed controller. The first example considers
	single-input single-output (SISO) problem and the second example present
	multi-input multi-output (MIMO) problem with high order dynamics.
	Consider a connected network is composed of 5 agents denoted by 1
	to 5 with one leader denoted by 0 and the leader node is connected
	to agents 1 and 5 as in Fig. \ref{fig:fig2}. 
	\begin{figure}[h!]
		\centering \includegraphics[scale=0.5]{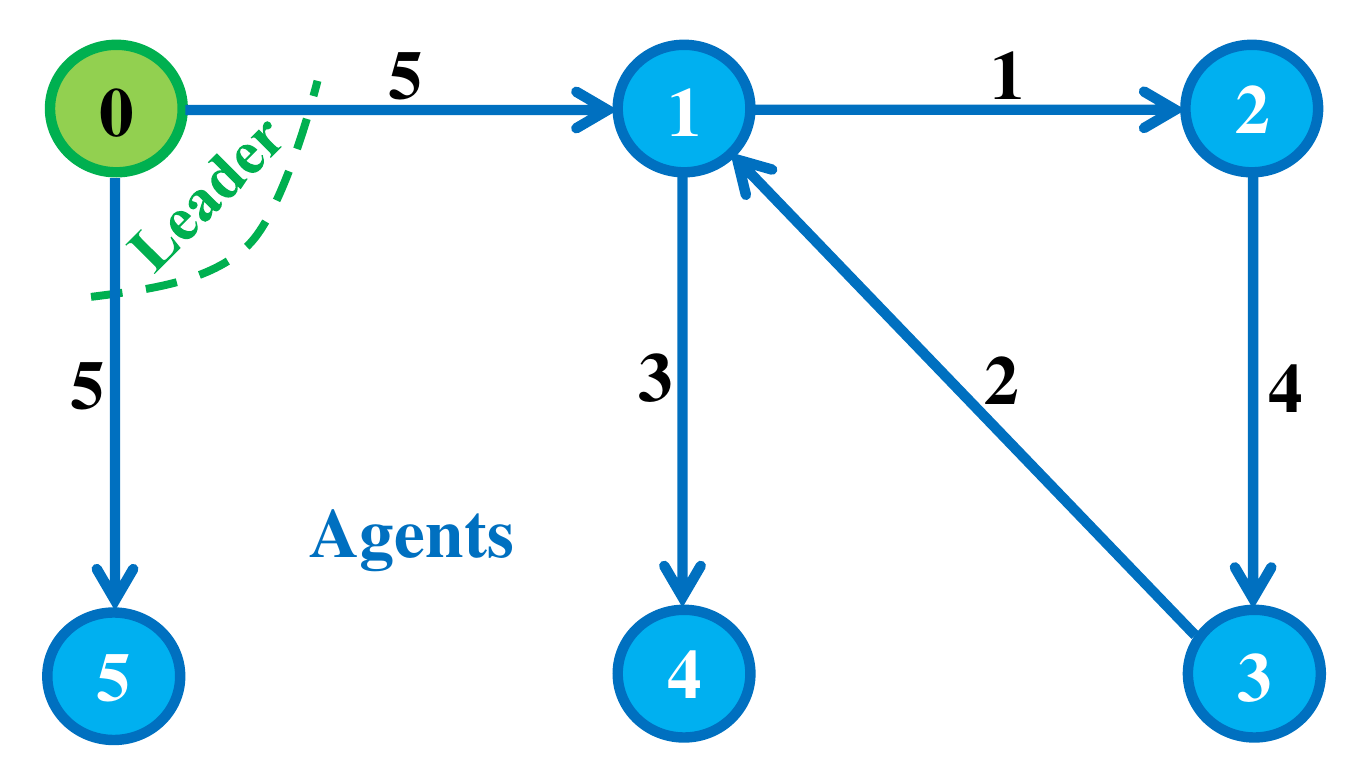} \caption{Connected graph with one leader and five agents.}
		\label{fig:fig2} 
	\end{figure}

	\textbf{Example 1 (SISO):} Let the graph in Fig. \ref{fig:fig2} be
	SISO with high order nonlinear dynamics such as 
	\[
	\begin{aligned}\dot{x}_{i}^{1} & =x_{i}^{2}\\
	\dot{x}_{i}^{2} & =x_{i}^{3}\\
	\dot{x}_{i}^{3} & =f_{i}\left(x_{i}\right)+u_{i}\\
	y_{i} & =x_{i}^{1}
	\end{aligned}
	\]
	such that $i=1,2,\ldots,5$ with nonlinear dynamics 
	\[
	\begin{aligned}f_{1}= & x_{1}^{2}{\rm sin}\left(x_{1}^{1}\right)+{\rm cos}\left(x_{1}^{3}\right)^{2},\\
	f_{2}= & -\left(x_{2}^{1}\right)^{2}x_{2}^{2}+0.01x_{2}^{1}-0.01\left(x_{2}^{1}\right)^{3},\\
	f_{3}= & x_{3}^{2}+{\rm sin}\left(x_{3}^{3}\right),\\
	f_{4}= & -3\left(x_{4}^{1}+x_{4}^{2}-1\right)^{2}\left(x_{4}^{1}+x_{4}^{2}+x_{4}^{3}-1\right)-x_{4}^{3}\\
	& +0.5{\rm sin}\left(2t\right)+{\rm cos}\left(2t\right),\\
	f_{5}= & {\rm cos}\left(x_{5}^{1}\right)
	\end{aligned}
	\]
	and the leader dynamics is 
	\[
	\begin{aligned}\dot{x}_{0}^{1}= & x_{0}^{2}\\
	\dot{x}_{0}^{2}= & x_{0}^{3}\\
	\dot{x}_{0}^{3}= & -x_{0}^{2}-2x_{0}^{3}+1+3{\rm sin}\left(2t\right)+6{\rm cos}\left(2t\right)\\
	& -\frac{1}{3}\left(x_{0}^{1}+x_{0}^{2}-1\right)\left(x_{0}^{1}+4x_{0}^{2}+3x_{0}^{3}-1\right)\\
	y_{0}= & x_{0}^{1}
	\end{aligned}
	\]
	The setting parameters in this problem were selected as $\rho_{\infty}=0.03\times{\bf \underline{1}}_{5\times1}$,
	$\rho_{0}=5\times{\bf \underline{1}}_{5\times1}$, $\ell=0.6\times{\bf \underline{1}}_{5\times1}$,
	$\bar{\delta}=4\times{\bf \underline{1}}_{5\times1}$, $\underline{\delta}=5\times{\bf \underline{1}}_{5\times1}$,
	$c=30\times{\bf \underline{1}}_{5\times1}$, $k=0.1$, $\Gamma_{1}=10000\mathbf{I}_{5\times1}$,
	$\Gamma_{2}=10000\mathbf{I}_{5\times1}$ . $x_{0}\left(0\right)=[0.3,0.3,0.3]^{\top}$
	is the initial vector of the nonlinear leader system, and the values
	of agents are 
	\[
	\begin{aligned}
	x_{1}\left(0\right)&=[-0.2850,-0.0821,-0.2126]^{\top},\\
	x_{2}\left(0\right)&=[-0.6044,-0.3964,-0.0775]^{\top},\\
	x_{3}\left(0\right)&=[-0.2110,-0.4237,-0.3253]^{\top},\\
	x_{4}\left(0\right)&=[-0.1501,-0.3986,-0.0050]^{\top},\\
	x_{5}\left(0\right)&=[-0.3281,0.1618,-0.4160]^{\top}
	\end{aligned}
	\]
	Fig. \ref{fig:figSISO8} presents the output performance of the proposed
	control and it shows smooth tracking performance for $y=x^{1}$. It
	can be noticed that the networked system with unknown high nonlinear
	dynamics converged smoothly to the leader trajectory in the presence
	of high nonlinearities and time-varying disturbances. Fig. \ref{fig:figSISO9}
	illustrates the systematic convergence of the synchronized error $e_{i}$
	for $i=1,\ldots,5$ satisfying the predefined constraints and setting
	parameters imposed on the system. In fact, Fig. \ref{fig:figSISO9}
	shows how the error started from a predefined large set and reduced
	systematically into the predefined small set prescribed by the value
	of $\rho_{\infty}$. Moreover, Fig. \ref{fig:figSISO9} presents transformed
	error $\varepsilon_{i}$ associated to agent $i$. 
	\begin{figure}[h!]
		\includegraphics[scale=0.26]{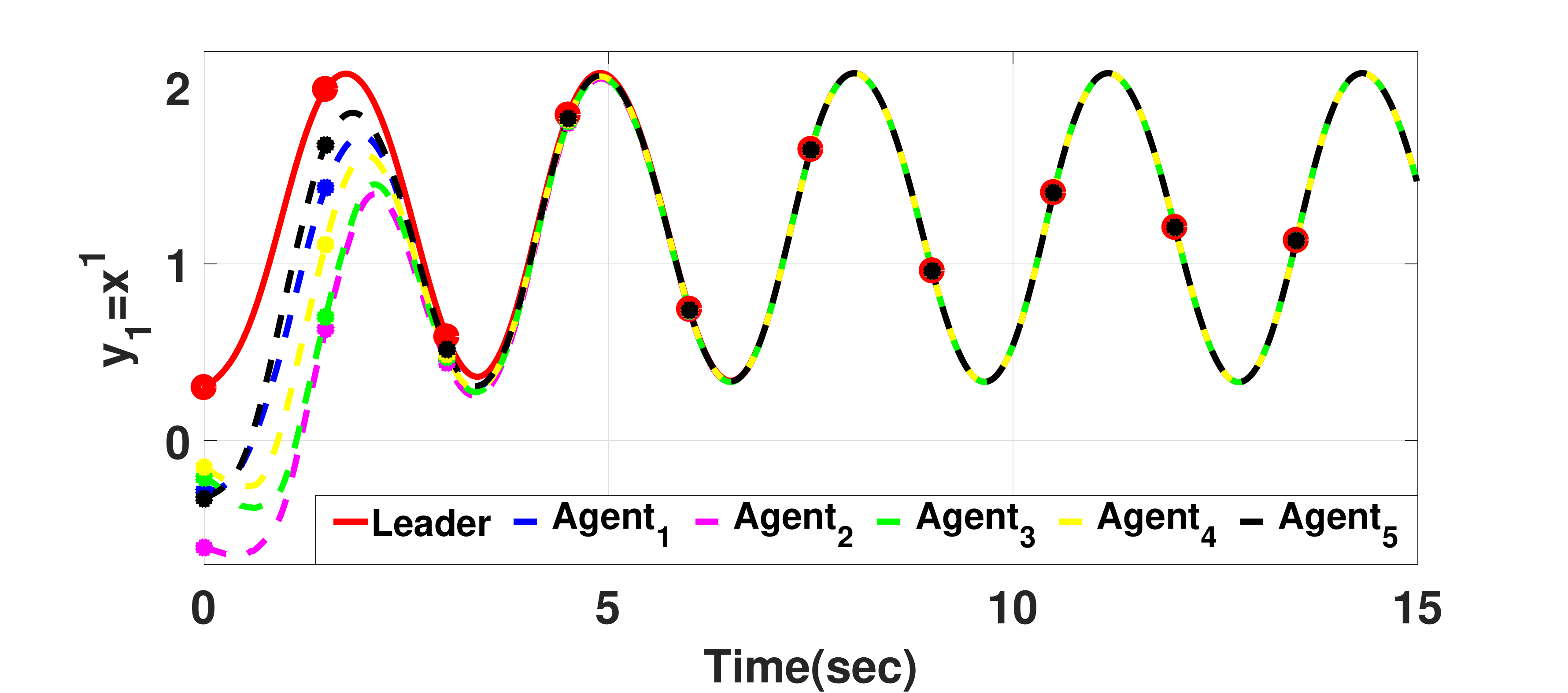} \caption{The output performance of high order nonlinear SISO networked system
			$y=\left[y_{0},y_{1},\ldots,y_{5}\right]$.}
		\label{fig:figSISO8} 
	\end{figure}
	
	\begin{figure}[h!]
		\centering \includegraphics[scale=0.26]{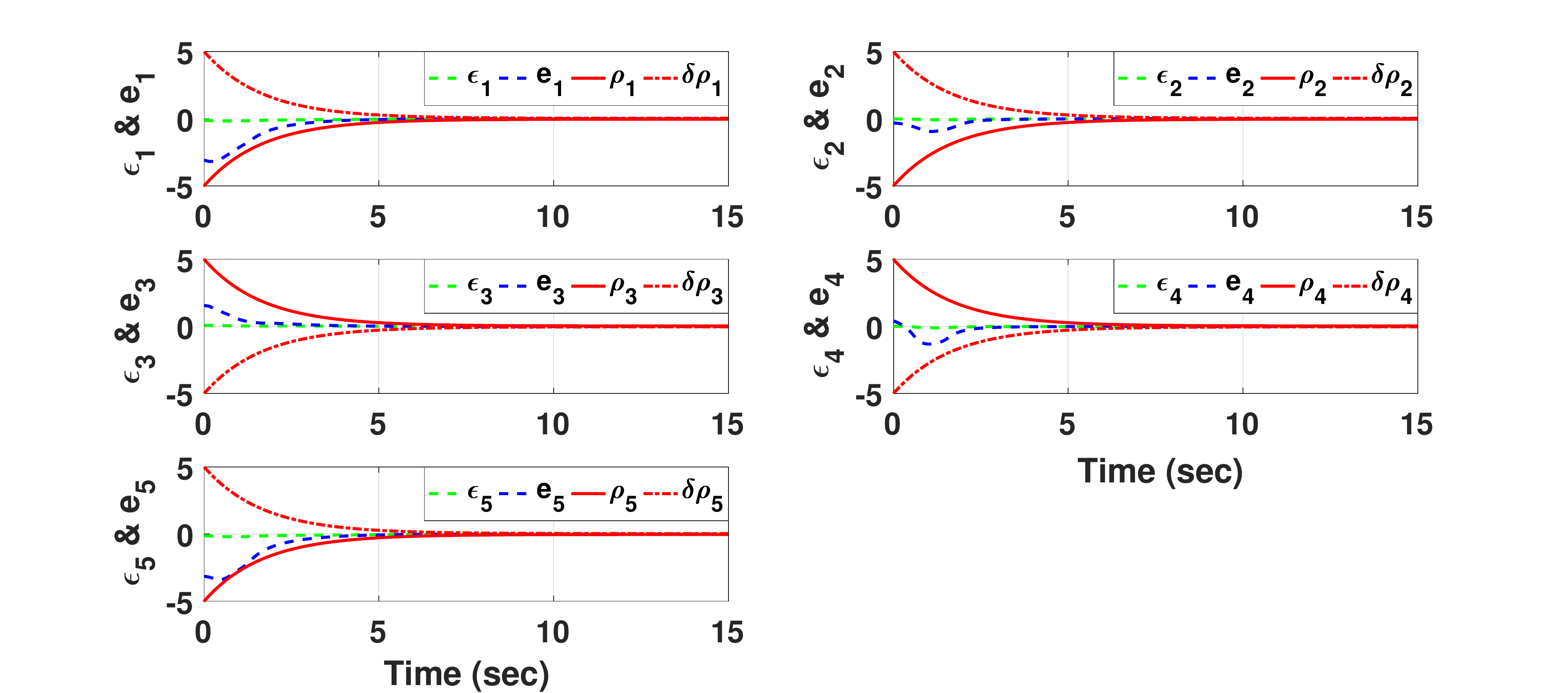} \caption{Error and transformed error of high order nonlinear SISO networked
			system.}
		\label{fig:figSISO9} 
	\end{figure}
	
	\textbf{Example 2 (MIMO Problem):} Consider the graph in Fig. \ref{fig:fig2}
	and let each agent be defined by a second order system with 2 inputs
	and 2 outputs with high nonlinear dynamics. The nonlinear dynamics
	are defined by 
	\[
	\begin{aligned}\begin{bmatrix}\ddot{x}_{i}^{1}\\
	\ddot{x}_{i}^{2}
	\end{bmatrix} & =\begin{bmatrix}f_{i}^{1}\left(x_{i},t\right)\\
	f_{i}^{2}\left(x_{i},t\right)
	\end{bmatrix}+\psi_{i}\left(t\right)\begin{bmatrix}x_{i}^{1}\\
	x_{i}^{2}
	\end{bmatrix}+\begin{bmatrix}D_{i}^{1}\left(t\right)\\
	D_{i}^{2}\left(t\right)
	\end{bmatrix}+\begin{bmatrix}u_{i}^{1}\left(t\right)\\
	u_{i}^{1}\left(t\right)
	\end{bmatrix}\\
	y_{i,:} & =\left[x_{i}^{1},x_{i}^{2}\right]^{\top}
	\end{aligned}
	\]
	where 
	\[
	\begin{aligned}f_{i}\left(x_{i}\right)=\begin{bmatrix}a_{i}^{1}x_{i}^{2}\left(x_{i}^{1}\right)^{2}\dot{x}_{i}^{2}+0.2{\rm sin}\left(a_{i}^{1}x_{i}^{1}\dot{x}_{i}^{1}\right)\\
	-a_{i}^{2}x_{i}^{1}x_{i}^{2}\dot{x}_{i}^{1}-0.2a_{i}^{2}{\rm cos}\left(a_{i}^{2}x_{i}^{2}t\right)x_{i}^{1}\dot{x}_{i}^{2}
	\end{bmatrix},\end{aligned}
	\]
	\[
	\begin{aligned}\psi_{i}\left(t\right)=\begin{bmatrix}3c_{i}^{1}{\rm sin}\left(0.5t\right) & 2c_{i}^{1}{\rm sin}\left(0.4c_{i}^{1}t\right){\rm cos}\left(0.3t\right)\\
	0.9{\rm sin}\left(0.2c_{i}^{2}t\right) & 2.5{\rm sin}\left(0.3c_{i}^{2}t\right)+0.3cos\left(t\right)
	\end{bmatrix},\end{aligned}
	\]
	\[
	\begin{aligned}D_{i}\left(t\right)=\begin{bmatrix}1+b_{i}^{1}{\rm sin}\left(b_{i}^{1}t\right)\\
	1.2{\rm cos}\left(b_{i}^{2}t\right)
	\end{bmatrix},\end{aligned}
	\]
	and 
	\[
	\begin{aligned}a=\begin{bmatrix}a_{i}^{1}\\
	a_{i}^{2}
	\end{bmatrix}=\begin{bmatrix}1.5 & 0.5 & 0.7 & 1.3 & 0.7\\
	0.5 & 1.4 & 0.1 & 1.3 & 2.4
	\end{bmatrix}^{\top},\\
	b=\begin{bmatrix}0.5 & 1.5 & 1.1 & 1.6 & 0.3\\
	0.7 & 1.2 & 1.3 & 0.5 & 0.3
	\end{bmatrix}^{\top},\\
	c=\begin{bmatrix}1.5 & 2.5 & 0.5 & 1.7 & 0.7\\
	0.5 & 1.7 & 1.1 & 0.3 & 0.4
	\end{bmatrix}^{\top}
	\end{aligned}
	\]
	where $f_{i}\left(\cdot\right)\in\mathbb{R}^{2}$ is system nonlinearity,
	$D_{i}\left(t\right)\in\mathbb{R}^{2}$ and $\psi_{i}\left(t\right)\in\mathbb{R}^{2\times2}$
	are time variant disturbances of the associated agent $i$. Also,
	the correspondent nonlinear and time variant disturbance components
	$f_{i}\left(\cdot\right)$, $D_{i}\left(t\right)$ and $\Upsilon_{i}\left(t\right)$
	are assumed to be completely unknown. It can be noticed that we considered
	$f_{i}\left(x_{i}\right)\neq f_{j}\left(x_{j}\right)$, $D_{i}\left(t\right)\neq D_{j}\left(t\right)$,
	$\psi_{i}\left(t\right)\neq\psi_{j}\left(t\right)$, for $i\neq j$
	and $i,j=1,\cdots,N$. The leader dynamics is selected to be $x_{0}=[0.6{\rm cos}(0.6t),0.8{\rm cos}(0.5t)]^{\top}$.
	The control parameters of the problem were defined as $\rho_{\infty}=0.03\times{\bf {\bf \underline{1}}_{5\times2}}$,
	$\rho_{0}=7\times{\bf {\bf \underline{1}}_{5\times2}}$, $l=0.6\times{\bf {\bf \underline{1}}_{5\times2}}$,
	$\Gamma_{1}=100\mathbf{I}_{5\times2}$, $\Gamma_{2}=100\mathbf{I}_{5\times2}$,
	$\bar{\delta}=7\times{\bf {\bf \underline{1}}_{5\times2}}$, $\underline{\delta}=7\times{\bf {\bf \underline{1}}_{5\times2}}$,
	$c=30\mathbf{I}_{5\times2}$, $k=0.01\mathbf{I}_{5\times2}$. Initial
	conditions of $x_{1}\left(0\right)=[0.2310,-0.0276]^{\top}$, $x_{2}\left(0\right)=[-0.1362,-0.4615]^{\top}$,
	$x_{3}\left(0\right)=[-0.2867,-0.1315]^{\top}$, $x_{4}\left(0\right)=[0.5157,0.3539]^{\top}$,
	$x_{5}\left(0\right)=[-0.4700,0.4070]^{\top}$ and the derivative
	of states is $\dot{x}\left(0\right)={\rm randn\left(5,2\right)}$
	where ''randn'' is a command in $\text{MATLAB}^{\circledR}$ denotes
	normal random distribution. It can be noticed that we considered different
	initial conditions such that $x_{i}\left(0\right)\neq x_{j}\left(0\right)$,
	for $i\neq j$ and $i,j=1,\cdots,N$.\\
		\begin{figure}[h!]
		\centering \includegraphics[scale=0.26]{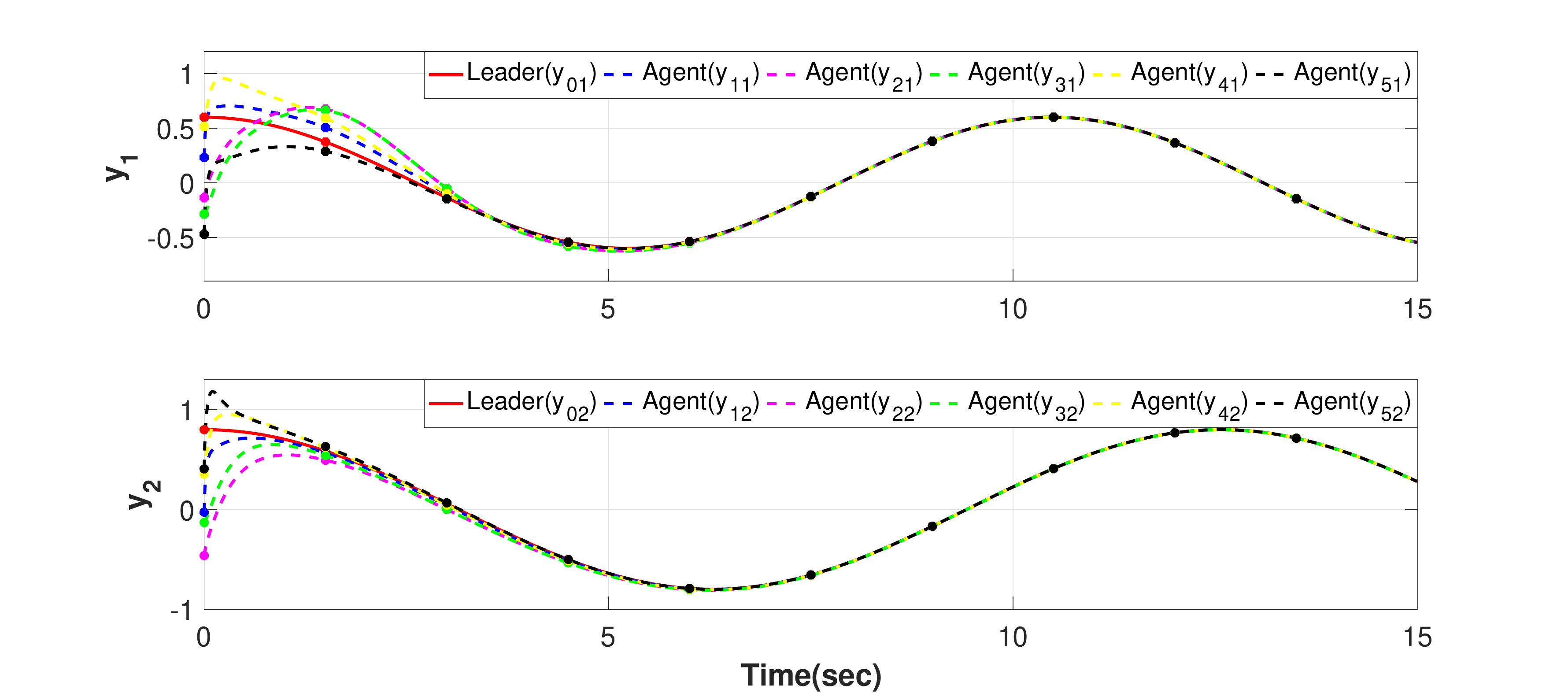} \caption{Output performance of MIMO nonlinear networked system for $y_{1}$
			and $y_{2}$.}
		\label{fig:fig7} 
	\end{figure}
	
	\begin{figure}[h!]
		\centering \includegraphics[scale=0.26]{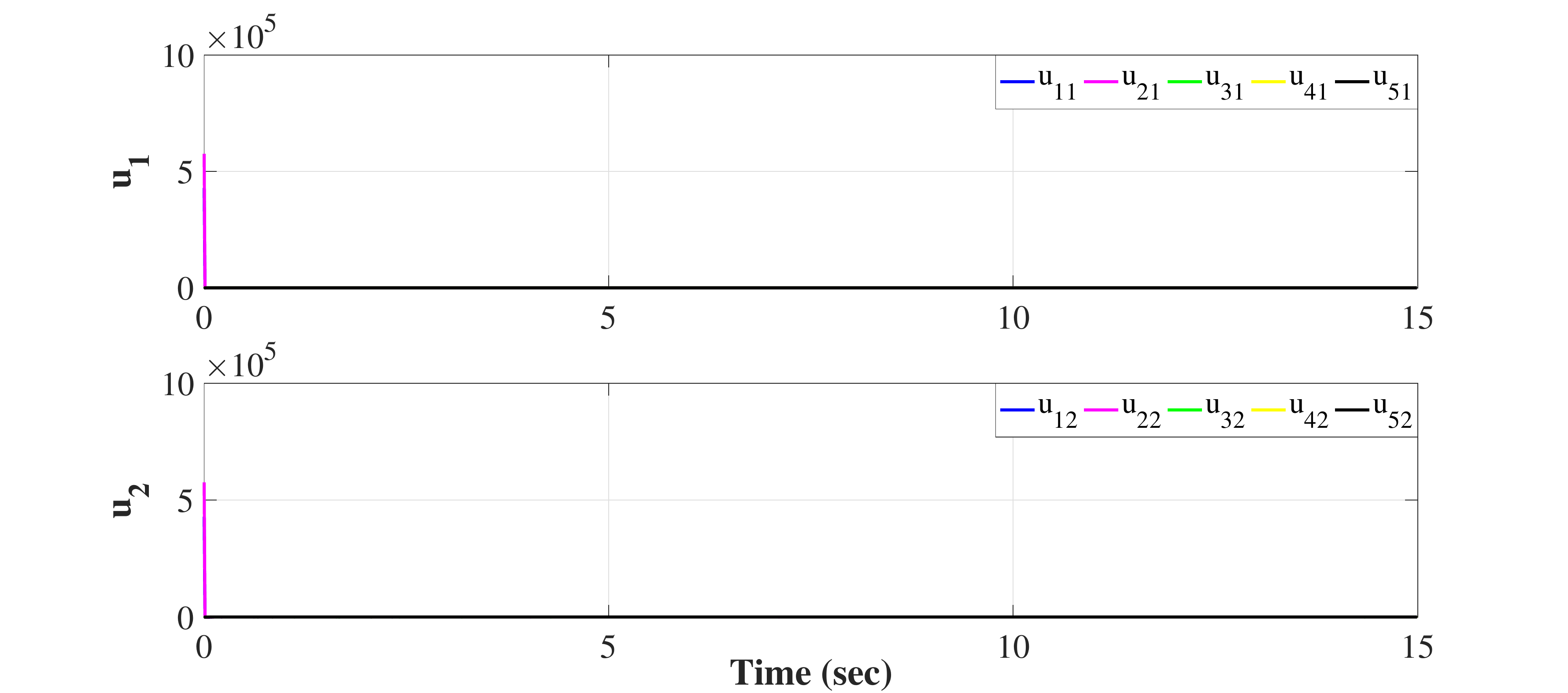} \caption{Control signal of MIMO nonlinear networked system for $u_{1}$ and
			$u_{2}$.}
		\label{fig:fig8} 
	\end{figure}
	
	\begin{figure}[h!]
		\centering \includegraphics[scale=0.26]{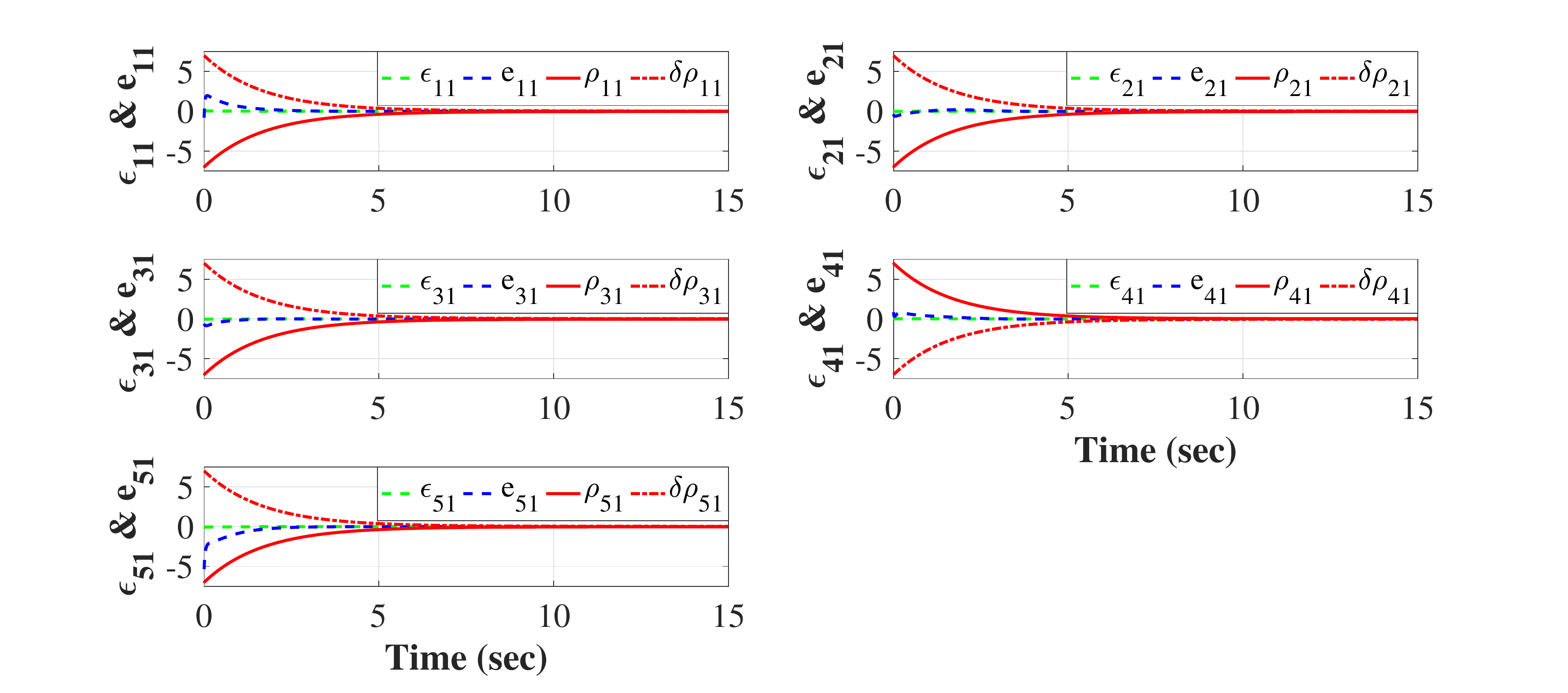} \caption{Error and transformed error of MIMO nonlinear networked system for
			the first output.}
		\label{fig:fig9} 
	\end{figure}
	
	\begin{figure}[h!]
		\centering \includegraphics[scale=0.26]{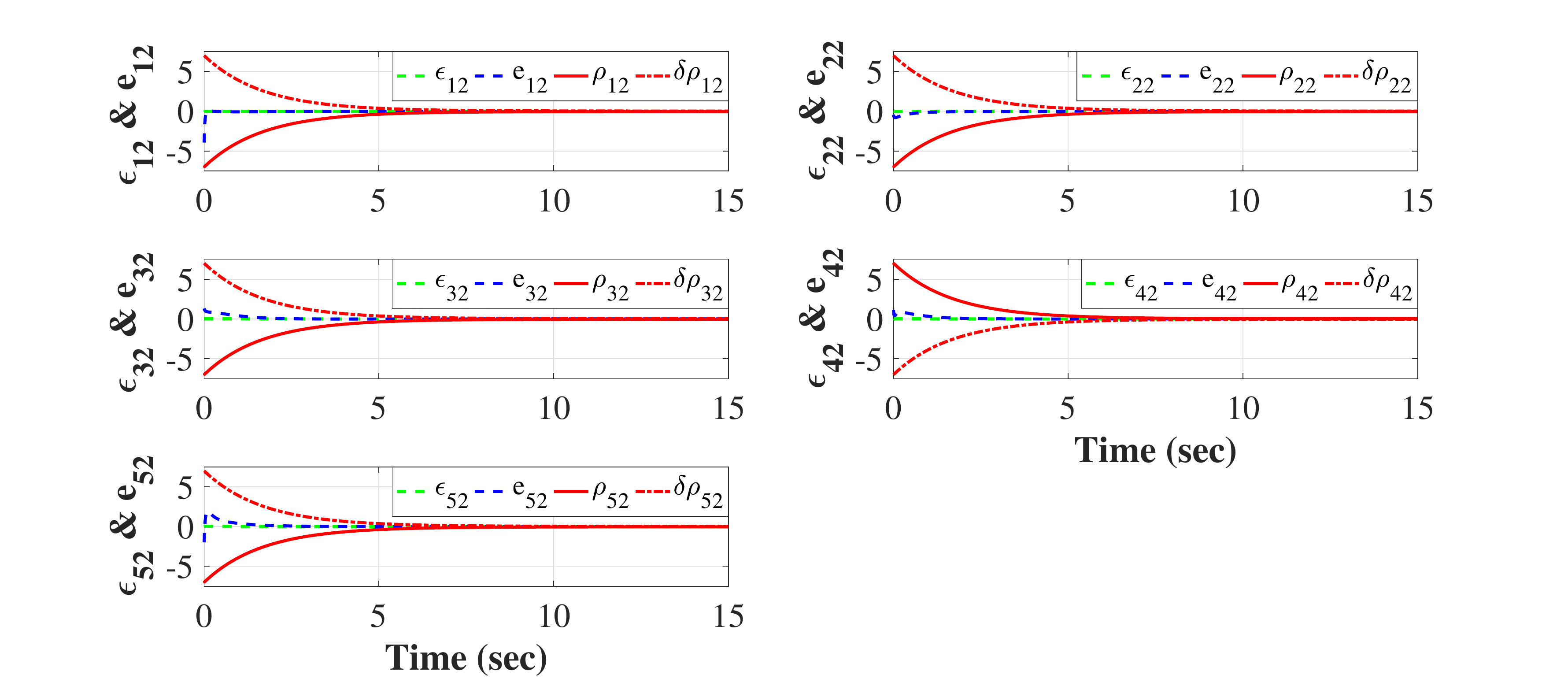} \caption{Error and transformed error of MIMO nonlinear networked system for
			the second output.}
		\label{fig:fig10} 
	\end{figure}

	\begin{figure*}
	\centering \includegraphics[scale=0.45]{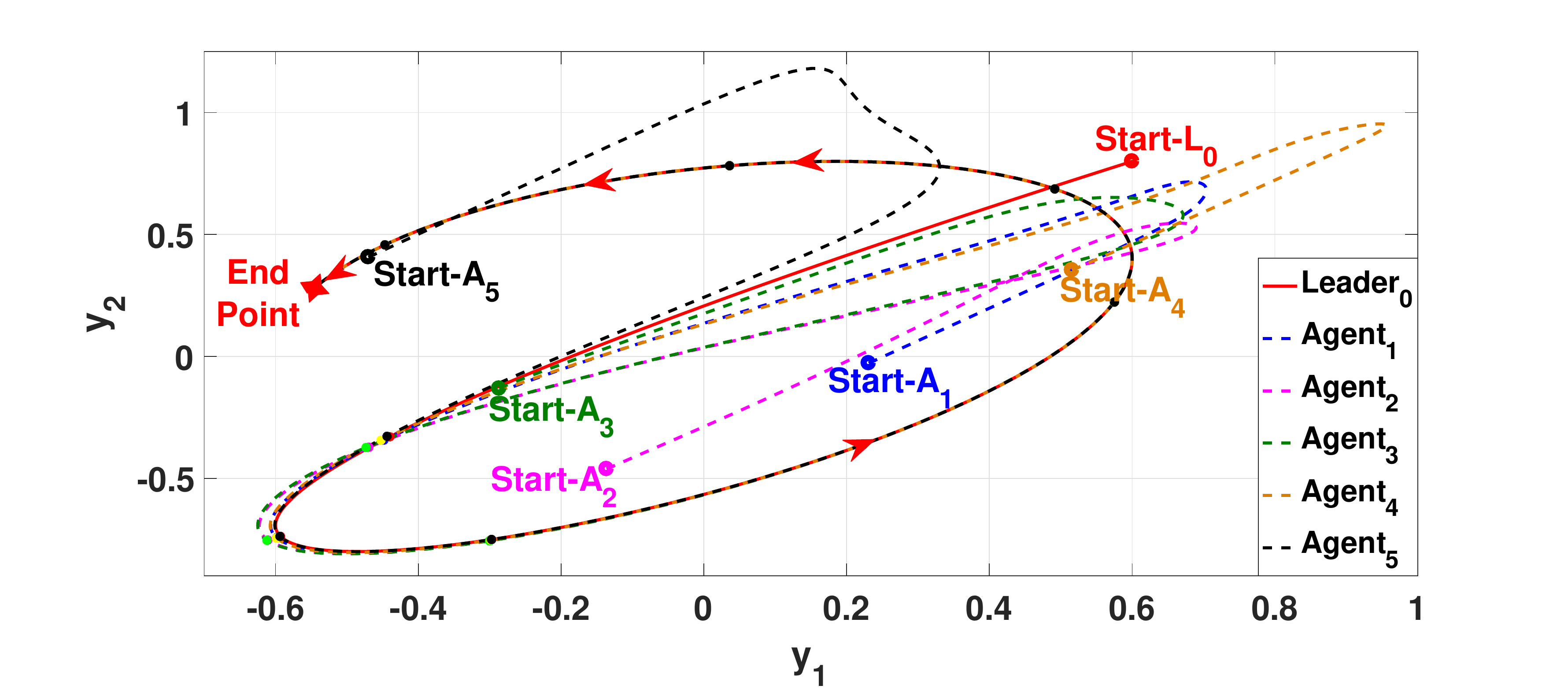} \caption{Phase plane plot for all the agents.}
	\label{fig:fig11} 
    \end{figure*}

	The robustness of the proposed controller against time variant uncertain
	parameters, time variant external disturbances, and high nonlinearities
	are tested. Fig. \ref{fig:fig7} shows the output performance of the
	proposed controller for this MIMO case. The output performance demonstrate
	impressive smooth tracking performance of the follower agents $x_{i}$
	for $i=1,\ldots,N$ to the leader agent $x_{0}$. The control input
	of system dynamics in the connected graph is demonstrated in Fig.
	\ref{fig:fig8}. The tracking errors and their associated transformed
	errors are depicted in Fig. \ref{fig:fig9} and \ref{fig:fig10}.
	The initial errors of $e_{i1}\left(0\right)$ and $e_{i2}$$\left(0\right)$
	satisfy the static boundaries that were defined initially by $\bar{\delta}$and
	$\underline{\delta}$. The transient errors $e_{i1}\left(t\right)$
	and $e_{i2}\left(t\right)$ for $t\geq0$ satisfy the dynamic boundaries
	which can be represented by $\rho_{i1}\left(t\right)$ and $\rho_{i2}\left(t\right)$,
	respectively. Finally, the errors are successfully trapped between
	$\pm\rho_{ij\infty}$ and $\delta\rho_{i,j}$ for $i=1,\ldots,N$
	and $j=1,2$. In fact, Fig. \ref{fig:fig7}, \ref{fig:fig8}, \ref{fig:fig9},
	and \ref{fig:fig10} illustrate the effectiveness and robustness of
	the proposed distributed adaptive control with prescribed performance
	characteristics of the high order unknown nonlinear networked systems.
	The phase plane is presented in Fig. \ref{fig:fig11}.

	\section{Conclusion \label{sec8}}
	
	A distributed cooperative adaptive control of high order nonlinear
	multi-agent systems with prescribed performance has been proposed.
	We considered the nonlinearities of the agents to be Lipschitz but
	completely unknown. Adaptive estimates has been used to estimate the
	system's nonlinearities and uncertainties. The controller has been
	developed for strongly connected digraph. The control signal has been
	chosen properly to ensure UUB stability of the synchronization errors.
	The controller successfully brought the agents from random initial
	values and synchronized their outputs to the desired trajectory within
	the prescribed performance constraints. Simulation examples included
	highly nonlinear heterogeneous systems with time varying parameters,
	uncertainties and disturbances.
	
	\section*{Acknowledgment}
	
	The author would like to thank \textbf{Babajide O Ayinde} for providing
	helpful comments that greatly improved the manuscript and for his
	assistance that made this work possible.
	
	%The authors received no financial support for the research, authorship, and/or publication of this article.
	
	% if have a single appendix:
	%\appendix[Proof of the Zonklar Equations]
	% or
	%\appendix  % for no appendix heading
	% do not use \section anymore after \appendix, only \section*
	% is possibly needed
	
	% use appendices with more than one appendix
	% then use \section to start each appendix
	% you must declare a \section before using any
	% \subsection or using \label (\appendices by itself
	% starts a section numbered zero.)
	
	% Can use something like this to put references on a page
	% by themselves when using endfloat and the captionsoff option.
	\ifCLASSOPTIONcaptionsoff \newpage\fi
	
	% trigger a \newpage just before the given reference
	% number - used to balance the columns on the last page
	% adjust value as needed - may need to be readjusted if
	% the document is modified later
	%\IEEEtriggeratref{8}
	% The "triggered" command can be changed if desired:
	%\IEEEtriggercmd{\enlargethispage{-5in}}
	
	% references section
	
	% can use a bibliography generated by BibTeX as a .bbl file
	% BibTeX documentation can be easily obtained at:
	% http://mirror.ctan.org/biblio/bibtex/contrib/doc/
	% The IEEEtran BibTeX style support page is at:
	% http://www.michaelshell.org/tex/ieeetran/bibtex/
	%\bibliographystyle{IEEEtran}
	% argument is your BibTeX string definitions and bibliography database(s)
	%\bibliography{IEEEabrv,../bib/paper}
	% <OR> manually copy in the resultant .bbl file
	% set second argument of \begin to the number of references
	% (used to reserve space for the reference number labels box)
	\bibliographystyle{IEEEtran}
	\bibliography{arXiv_Hashim_PPF}
	
	% biography section
	% 
	% If you have an EPS/PDF photo (graphicx package needed) extra braces are
	% needed around the contents of the optional argument to biography to prevent
	% the LaTeX parser from getting confused when it sees the complicated
	% \includegraphics command within an optional argument. (You could create
	% your own custom macro containing the \includegraphics command to make things
	% simpler here.)
	%\begin{IEEEbiography}[{\includegraphics[width=1in,height=1.25in,clip,keepaspectratio]{mshell}}]{Michael Shell}
	% or if you just want to reserve a space for a photo:
	%\begin{IEEEbiography}{Michael Shell}
	%Biography text here. 
	%\end{IEEEbiography}
	%% if you will not have a photo at all:
	%\begin{IEEEbiographynophoto}{John Doe}
	%Biography text here. 
	%\end{IEEEbiographynophoto}
	%% insert where needed to balance the two columns on the last page with
	%% biographies
	%%\newpage
	%\begin{IEEEbiographynophoto}{Jane Doe}
	%Biography text here. 
	%\end{IEEEbiographynophoto}
	
	% You can push biographies down or up by placing
	% a \vfill before or after them. The appropriate
	% use of \vfill depends on what kind of text is
	% on the last page and whether or not the columns
	% are being equalized.
	
	%\vfill
	
	% Can be used to pull up biographies so that the bottom of the last one
	% is flush with the other column.
	%\enlargethispage{-5in}
	
	% that's all folks
	
\end{document}